\documentclass[11pt]{article}
\usepackage{amsfonts,amsmath,amssymb}
\usepackage{amsthm}
\usepackage{fancyvrb}
\usepackage{bm}
\usepackage{graphicx}
\usepackage{tikz}
\usepackage{url}
\usepackage[ruled]{algorithm2e}
\usepackage{float}
\newtheorem{theorem}{Theorem}[section]
\newtheorem{lemma}[theorem]{Lemma}
\newtheorem{proposition}[theorem]{Proposition}
\newtheorem{corollary}[theorem]{Corollary}
\newtheorem{problem}[theorem]{Problem}
\newtheorem{characterization}[theorem]{Characterization}
\newtheorem{definition}[theorem]{Definition}
\newtheorem{remark}[theorem]{Remark}

\theoremstyle{definition}
\newtheorem{example}[theorem]{Example}

\newcommand\BShad{{\operatorname{BShad}}}

\newcommand\Corn{{\operatorname{Corn}}}

\newcommand\Shad{{\operatorname{Shad}}}
\newcommand\Borel{{\operatorname{Borel}}}
\newcommand\slex{{\operatorname{slex}}}

\newcommand\supp{{\operatorname{supp}}}

\newcommand\C{\mathcal{C}}
\def\NZQ{\mathbb}

\def\ZZ{{\NZQ Z}}

\newcommand\ds{\mathbf{ds}}

\newcommand\blfootnote[1]{%
  \begingroup
  \renewcommand\thefootnote{}\footnote{#1}%
  \addtocounter{footnote}{-1}%
  \endgroup
}

\textwidth=15cm \textheight=22cm \topmargin=0.5cm
\oddsidemargin=0.5cm \evensidemargin=0.5cm \pagestyle{plain}

\begin{document}

\title{Computational methods for $t$-spread monomial ideals}
\author{Luca Amata\\
{\footnotesize Ministry of Education and Merit}\\
{\footnotesize Via Tripoli 7, 98076 Sant'Agata di Militello, Messina, Italy}\\
{\footnotesize e-mail: luca.amata@posta.istruzione.it}
}  
\date{}
\maketitle

\begin{abstract}
Let $K$ be a field and $S=K[x_1,\ldots,x_n]$  a standard polynomial ring over $K$.
In this paper, we give new optimized algorithms to compute the smallest $t$-spread lexicographic set and the smallest $t$-spread strongly stable set containing a given set of $t$-spread monomials of $S$.
Some technical tools allowing to compute the cardinality of $t$-spread strongly stable sets avoiding their construction are also  presented.
Some functions to ease the calculation of well known results about algebraic invariants for $t$-spread ideals are implemented in a \emph{Macaulay2} package, \texttt{TSpreadIdeals}, described in the paper.

\blfootnote{
\hspace{-0,3cm} \emph{Keywords:} squarefree monomial ideals, $t$-spread ideals, strongly stable ideals, lexicographic ideals, extremal Betti numbers, computational algebra.

\emph{2020 Mathematics Subject Classification:} 05E40, 13-04, 13B25, 13D02, 16W50, 68W30.

}
\end{abstract}

\section{Introduction} \label{sec:1}
In this paper, we introduce some new algorithms 
to smoothly manage sets of $t$-spread monomials, $t$-spread ideals and some algebraic invariants of such ideals. The $t$-spread structures have been introduced by Ene, Herzog and Qureshi \cite{EHQ} in 2018. Since then some authors have investigated these new classes of ideals to generalize some known results about graded ideals of a polynomial ring (see \cite{AC8,ACF3,AEL,DHQ}, \emph{et al.}). Several 
questions still remain open. So, our goal is to provide tools to simplify the future investigations of the researchers.

We also implement a new \emph{Macaulay2} \cite{GDS} package: \texttt{TSpreadIdeals}.
As will be seen below, the package contains some original results and algorithms introduced in this paper, and, other ones that have been previously analyzed by the author of this paper and some other researchers \cite{ACF2,ACF1,CAC}. To the best of our knowledge, the package \texttt{TSpreadIdeals} is the first one to contain the implementation of such techniques in a computer algebra system. 

The algorithms implemented in the package are devoted to manage $t$-spread monomials. Some auxiliary routines allow the user to check which $t$-spread class a monomial belongs to, or to sieve all the $t$-spread monomials from a list of monomials. Furthermore, there is a function giving the possibility to compute the $t$-spread shadow  of a list of monomials.

By means of the functions in the package \texttt{TSpreadIdeals}, it is possible to construct, in a simple way, suitable $t$-spread sets of monomials or $t$-spread ideals with particular properties. For example, given a set of $t$-spread monomials $N$, it is possible to obtain the smallest $t$-strongly stable set of monomials $B_t\{N\}$ (Definition~\ref{def:tstrongly}) that contains $N$. The same operations can be done for the smallest $t$-lex set $L_t\{N\}$ (Definition~\ref{def:tlex}). The package also provides some methods to compute \emph{a priori} the cardinality of the aforementioned sets. The theoretical justification of these methods is outlined in this paper.     

The technical functions we have mentioned allow us to provide a computational support to the characterization of important invariants of $t$-spread ideals. For instance, the problem of determining if a given configuration (a $r$-tuple of pairs of integers and a $r$-tuple of integers) represents an admissible configuration for the extremal Betti numbers of a $t$-strongly stable ideal
(Problem~\ref{prob:Betti}, see also \cite{AC8,AC7,ACF2,ACF1}). In the case of a positive answer, it is possible to build the smallest $t$-strongly stable ideal with the given configuration of extremal Betti numbers.

Another supported feature of the package is related to the generalization of the \emph{Kruskal-Katona}'s theorem \cite{CAC}. The package allows one to compute the $f_t$-vector of a $t$-spread strongly stable ideal. Moreover, it is possible to state whether a sequence of integers is the $f_t$-vector of a suitable $t$-spread ideal. In the affirmative case, it is possible to build the smallest $t$-lex ideal whose $f_t$-vector coincides with the given sequence.  

From a computational point of view, the great advantage of using combinatorial methods to solve such problems is that the functions can be optimized for working faster on $t$-spread structures. Indeed, the monomials are treated as sequences of positive integers, and this allowed us to find alternative algorithms to give directly $t$-spread monomials from the computation. This means that it is possible to avoid the classical (unfortunately slow) computation involving all  monomials of the polynomial ring ($0$-spread) in order to take a quotient and obtain the desired results.

The paper is structured in three main sections. In Section~\ref{sec:2}, to keep the paper almost self contained, we recall some basic notions that will be used throughout the paper. First, in the Subsection~\ref{sec:2_1}, the notion of $t$-spread monomial is introduced together with some of its useful properties. In the Subsection~\ref{sec:2_2}, we define particular subsets of $t$-spread monomials: $t$-spread lex and $t$-spread strongly stable sets. The Subsection~\ref{sec:2_3} is devoted to review some definitions and properties related to the extremal Betti numbers of a $t$-spread ideal.
In Section~\ref{sec:3}, we present some original computational methods to manage special sets of $t$-spread monomials. The main procedures presented here are translated in pseudocode. In Subsection~\ref{sec:3_1}, we give procedures to construct particular $t$-lex and $t$-strongly stable sets of monomials. In Subsection~\ref{sec:3_2}, some combinatorial tools allowed us to justify counting methods for the particular sets built in Subsection~\ref{sec:3_1}.
Finally, Section~\ref{sec:4} contains our conclusions and perspectives.
In Appendix~\ref{sec:5}, we give a brief description of the package. The Subsection~\ref{sec:5_1} contains several \emph{Macaulay2} code examples for managing $t$-spread sets and solving the relevant problems discussed in this and other papers. The Subsection~\ref{sec:5_2} provides a list of the main methods implemented in the package together with short descriptions.
 
\section{Background and notation} \label{sec:2}

Let $S=K[x_1,\ldots,x_n]$ be the standard polynomial ring in $n$ indeterminates over a field $K$. The notions of $t$-spread monomials and  $t$-spread monomial ideals have been introduced in \cite{EHQ}. These classes of graded ideals are the objects of our investigation. 

Throughout the paper, given a positive integer $r$, we set $[r] = \{1,2,\ldots,r\}$.

\subsection{Basics on $t$-spread monomial ideals}\label{sec:2_1}
Let $t\ge0$ be a nonnegative integer, a monomial $x_{i_1}x_{i_2}\cdots x_{i_d}$ of $S$ with $1\le i_1\le i_2\le\cdots\le i_d\le n$ is called \emph{$t$-spread}, if $i_{j+1}-i_j\ge t$, for all $j\in [d-1]$. A \emph{$t$-spread monomial ideal} is an ideal generated by $t$-spread monomials.\\
Clearly, every monomial ideal of $S$ is $0$-spread and every squarefree monomial ideal is $1$-spread. Hence, for $t\ge 1$ every $t$-spread monomial is squarefree.

The unique minimal set of monomial generators of a monomial ideal $I$ is denoted by $G(I)$. Therefore, we can define
\[G(I)_{d}= \{u\in G(I)\,:\, \deg(u)=d\}.\]


Let $u$ be a $t$-spread monomial of $S$; we denote by $\supp(u)$ the set of all index $i$ for which $x_i$ divides $u$, and by $\max(u)$ and $\min(u)$ the maximal and the minimal index $i$ belonging to $\supp(u)$, respectively.
By convention, we set $\max(1)=\min(1)=0$.

Let us denote by $M_{n,d,t}$ the set of all $t$-spread monomials of degree $d$ of the ring $S$. If $1+(d-1)t\leq n$ then $M_{n,d,t}$ is nonempty. Furthermore, using the notation in \cite{CAC}, we denote by $[I_j]_t$ the set of all $t$-spread monomials of degree $j$ of a monomial ideal $I$.

From \cite[Theorem 2.3]{EHQ} (see also \cite{CAC}), the cardinality of $M_{n,d,t}$ is given by 
\begin{equation}\label{eq:cardtsp}
|M_{n,d,t}|=\binom{n-(d-1)(t-1)}{d}.
\end{equation}

Now, for a nonempty subset $N$ of $M_{n,d,t}$, we define the \emph{$t$-shadow} of $N$
\begin{equation}\label{def:shad}
\Shad_t(N)= \{x_iw\, :\, w\in N\;\; \mbox{and}\;\; i=1,\ldots,n\}\cap M_{n,d+1,t}.
\end{equation}

Throughout the paper, we assume that $t>0$ and $M_{n,d,t}$ is endowed with the \emph{squarefree lexicographic order}, $>_{\slex}$ \cite{AHH2}, \emph{i.e.}, let $u=x_{i_1}x_{i_2}\cdots x_{i_d}$ and  $v=x_{j_1}x_{j_2}\cdots x_{j_d}$ be two $t$-spread monomials of degree $d$, with $1\le i_1<i_2<\cdots<i_d\le n$ and $1\le j_1<j_2<\cdots<j_d\le n$, then $u>_{\slex}v$ if $i_1=j_1,\ldots,i_{s-1}=j_{s-1}$ and $i_s<j_s$, for some $1\le s\le d$.\\
By using this monomial order, if $N$ is a nonempty subset of $M_{n,d,t}$, then we denote by $\max N$ ($\min N$) the maximum (minimum) monomial of $N$ with respect to $>_{\slex}$.

\subsection{Special classes of $t$-spread monomial sets}\label{sec:2_2}
Now, we recall the definitions of some interesting classes of $t$-spread monomial ideals, \emph{e.g.}, $t$-spread strongly stable ideals and $t$-spread lexicographic ideals.

\begin{definition}\label{def:tstrongly}\rm
A subset $N$ of $M_{n,d,t}$ is called a \emph{$t$-strongly stable} set if, taking a $t$-spread monomial $u\in N$, for all $j\in \supp(u)$ and all $i$, $1\le i< j$, such that $x_i(u/x_j)$ is a $t$-spread monomial, then it follows that $x_i(u/x_j)\in N$.

A $t$-spread monomial ideal $I$ is \emph{$t$-strongly stable} if $[I_j]_t$ is a $t$-spread strongly stable set for all $j$.
\end{definition}

To verify the $t$-strongly stability of a monomial ideal $I$, it is sufficient to investigate the set $G(I)$ \cite[Lemma 1.2]{EHQ}.


Let $N=\{u_1,\ldots,u_r\}\subset M_{n,d,t}$ be a set of $t$-spread monomials of $S$; we denote by $B_t\{N\}=B_t\{ u_1, \ldots, u_r\}$ the smallest $t$-strongly stable set containing $N$.
Moreover, we denote by $B_t(N)$ the $t$-strongly stable ideal generated by $B_t\{N\}$. If $N=\{u\}$ is a singleton, then we write $B_t(N)=B_t(u)$. 

\begin{remark}\label{rem:ssmaxmin}\em
If $u\in M_{n,d,t}$ is a $t$-spread monomial of $S$, then
\[
\max B_t\{u\}=\max M_{n,d,t}=x_1x_{1+t}x_{1+2t}\cdots x_{1+(d-1)t}\quad \mbox{and}\quad \min B_t\{u\}=u.
\]
\end{remark}

Given a $t$-spread monomial $v\in B_t\{u\}\subset M_{n,d,t}$, we denote with
\[
B_t[v,u]=\{w\in B_t\{u\}\, :\, v \geq_\slex w \} 
\]
the \emph{$t$-strongly stable segment} of initial element $v$ and final element $u$. Trivially, $B_t[u,u]=\{u\}$. In particular, we have $B_t\{u\}=B_t[\max M_{n,d,t},u]$.\\

A characterization of $t$-strongly stable ideals can be found in  \cite{JT}. For this purpose, Herzog and Hibi have introduced a partial order on $M_{n,d,t}$, the \emph{Borel order}.
Let $u=x_{i_1}x_{i_2}\cdots x_{i_d}$ and  $v=x_{j_1}x_{j_2}\cdots x_{j_d}$ be two $t$-spread monomials of degree $d$, with $1\le i_1<i_2<\cdots<i_d\le n$ and $1\le j_1<j_2<\cdots<j_d\le n$, then $v\geq_{\Borel} u$ if $j_s \leq i_s$, for $1\le s\le d$.

From \cite[Lemma~4.2.5]{JT} it follows the following characterization.

\begin{characterization}\label{char:strongly}
A set of monomials $N\subseteq M_{n,d,t}$ is $t$-strongly stable if and only if, for all $u\in N$ and all $v\in M_{n,d,t}$ such that $v \geq_\Borel u$, we have $v\in N$.
\end{characterization}

As a particular class of $t$-strongly stable ideals, we recall the definition of $t$-spread lexicographic ideals.
 
\begin{definition}\label{def:tlex}\rm
A subset $N$ of $M_{n,d,t}$ is called a \emph{$t$-lex} set if, for all $t$-spread monomials $u\in N$ and all monomials $v\in S$ such that $v\ge_{\slex}u$, we have $v\in N$.
A $t$-spread monomial ideal $I$ is \emph{$t$-lex} if $[I_j]_t$ is a $t$-lex set for all $j$. 
\end{definition}

If $N=\{u_1,\ldots,u_r\}\subset M_{n,d,t}$ is a set of $t$-spread monomials of $S$, we denote by $L_t\{N\}=L_t\{ u_1, \ldots, u_r\}=L_t\{\min N\}=\{w\in M_{n,d,t}\, :\, w\geq_{\slex} \min N\}$, \emph{i.e.}, the smallest $t$-lex set containing $N$. Also, we denote by $L_t(N)$ the $t$-lex ideal generated by $L_t\{N\}$. As before, if $N=\{u\}$ then $L_t(N)=L_t(u)$.

As in the Remark~\ref{rem:ssmaxmin}, we can define the \emph{$t$-lex segment} of initial element $v$ and final element $u$
\[
L_t[v,u]=\{w\in L_t\{u\}\, :\, v \geq_\slex w \}. 
\]
Also in this case, we have $L_t[u,u]=\{u\}$ and $L_t\{u\}=L_t[\max M_{n,d,t},u]$.

\subsection{Some algebraic invariants}\label{sec:2_3}
Here we recall some definitions to describe important algebraic invariants of a graded ideal. The package introduced in this paper will make easy the computation of some of these invariants.
 
It is well known that every graded ideal $I$ of $S$ has a minimal graded free $S$-resolution \cite{Ei,JT},
\[
F_{\scriptscriptstyle\bullet}:0\rightarrow \bigoplus_{j\in\ZZ}S(-j)^{\beta_{r,j}}\rightarrow \cdots\rightarrow \bigoplus_{j\in\ZZ}S(-j)^{\beta_{1,j}}\rightarrow \bigoplus_{j\in\ZZ}S(-j)^{\beta_{0,j}}\rightarrow I\rightarrow 0.
\]
The integer $\beta_{i,j}$ is a graded Betti number of $I$, and  represents the dimension as a $K$-vector space of the $j$-th graded component of the $i$-th free module of the resolution. Each of the numbers $\beta_i=\sum_{j\in\ZZ}\beta_{i,j}$ is called the $i$-th Betti number of $I$.

A powerful result \cite[Corollary 1.12]{EHQ} allows to easily compute the graded Betti numbers of a $t$-spread strongly stable ideal: 
\begin{equation}\label{eq:ssbetti}
\beta_{i,i+j}(I)=\sum_{u\in G(I)_j}\binom{\max(u)-t(j-1)-1}{i}.
\end{equation}



A significant subset of the graded Betti numbers is constituted by the extremal ones. The latter represent a refinement of very famous algebraic invariants: the projective dimension and the regularity of Castelnuovo-Mumford \cite{BCP,MS}.  
\begin{definition}\rm 
A graded Betti number $\beta_{k,k+\ell}(I)\ne 0$ is called \emph{extremal} if $\beta_{i,i+j}(I)=0$ for all $i\ge k,j\ge\ell,(i,j)\ne(k,\ell)$.
\end{definition}

We report some useful results, stated in \cite{AC8}, about extremal Betti numbers. 

\begin{characterization} \label{char:exbetti} \cite[Theorem 1]{AC8}
Let $I$ be a $t$-spread strongly stable ideal of $S$.
The following conditions are equivalent:
\begin{enumerate}
\item[\em(a)] $\beta_{k,k+\ell}(I)$ is extremal;
\item[\em(b)] $k+t(\ell-1)+1=\max\big\{\max(u):u\in G(I)_\ell\big\}$ and $\max(u)<k+t(j-1)+1$, for all $j>\ell$ and for all $u\in G(I)_j$.
\end{enumerate}
\end{characterization}

\begin{corollary}\label{cor:extbetti} \cite[Corollary 2]{AC8}
Let $I$ be a $t$-spread strongly stable ideal of $S$ and let $\beta_{k,k+\ell}(I)$ be an extremal Betti number of $I$. Then
\[
\beta_{k,k+\ell}(I)=\Big|\Big\{ u\in G(I)_\ell:\max(u)=k+t(\ell-1)+1 \Big\}\Big|.
\]
\end{corollary}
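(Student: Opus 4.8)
The plan is to combine the explicit formula~\eqref{eq:ssbetti} for the graded Betti numbers of a $t$-spread strongly stable ideal with the numerical description of extremality provided by Characterization~\ref{char:exbetti}. Applying \eqref{eq:ssbetti} with $i=k$ and $j=\ell$ gives
\[
\beta_{k,k+\ell}(I)=\sum_{u\in G(I)_\ell}\binom{\max(u)-t(\ell-1)-1}{k},
\]
so the whole problem reduces to determining which summands are nonzero. For brevity set $m_u:=\max(u)-t(\ell-1)-1$ for $u\in G(I)_\ell$.

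First I would observe that every $u\in G(I)_\ell$ is a $t$-spread monomial of degree $\ell$, so its support consists of $\ell$ indices that are pairwise at distance at least $t$; hence $\max(u)\ge 1+(\ell-1)t$ and the integer $m_u$ is nonnegative. Next, since $\beta_{k,k+\ell}(I)$ is extremal, condition (b) of Characterization~\ref{char:exbetti} applies; in particular $k+t(\ell-1)+1=\max\{\max(u):u\in G(I)_\ell\}$, which says exactly that $m_u\le k$ for every $u\in G(I)_\ell$, with equality attained for at least one such $u$. Combining the two facts, for every $u\in G(I)_\ell$ we have $0\le m_u\le k$, so $\binom{m_u}{k}$ equals $1$ when $m_u=k$ and $0$ otherwise. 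Therefore the sum counts precisely the generators $u\in G(I)_\ell$ with $m_u=k$, i.e.\ with $\max(u)=k+t(\ell-1)+1$, which is the asserted formula.

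There is no real obstacle in this argument; the only point requiring a moment's care is the nonnegativity of $m_u$, since it is this that forces $\binom{m_u}{k}=0$ (rather than some nonzero value) for the generators with $m_u<k$. It is also worth noting which part of Characterization~\ref{char:exbetti} is actually used: the second half of condition (b) (the bound $\max(u)<k+t(j-1)+1$ for $j>\ell$) is what makes $\beta_{k,k+\ell}(I)$ extremal in the first place, but it plays no role in the count itself — only the identification of $k+t(\ell-1)+1$ as the top value of $\max$ on $G(I)_\ell$ enters.
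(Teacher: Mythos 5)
Your proposal is correct: the paper states this corollary without proof (it is quoted from \cite[Corollary 2]{AC8}), and your derivation — specializing the formula (\ref{eq:ssbetti}) to $i=k$, $j=\ell$, noting $0\le m_u=\max(u)-t(\ell-1)-1\le k$ by $t$-spreadness and by the first half of condition (b) of Characterization~\ref{char:exbetti}, so that each summand $\binom{m_u}{k}$ is $1$ exactly when $\max(u)=k+t(\ell-1)+1$ and $0$ otherwise — is precisely the standard argument behind the cited result. Your side remarks (nonnegativity of $m_u$, and that only the identification of $k+t(\ell-1)+1$ as the top value of $\max$ on $G(I)_\ell$ is used in the count) are accurate.
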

	
Let $\beta_{k,k+\ell}(I)$ be an extremal Betti number of $I$, the pair $(k,\ell)$ is called a \emph{corner} of $I$.
If $(k_1,\ell_1),$ $\ldots,$ $(k_r,\ell_r)$, with $n-1\ge k_1>k_2>\cdots>k_r\ge1$ and $1\le \ell_1<\ell_2<\cdots<\ell_r$, are all the corners of a graded ideal $I$ of $S$, the set
\[
\Corn(I)=\Big\{(k_1,\ell_1),(k_2,\ell_2),\ldots,(k_r,\ell_r)\Big\}
\]
is called the \emph{corner sequence} of $I$. The $r$-tuple
\[
a(I)=\big(\beta_{k_1,k_1+\ell_1}(I),\beta_{k_2,k_2+\ell_2}(I),\ldots,\beta_{k_r,k_r+\ell_r}(I)\big)
\]
is called the \emph{corner values sequence} of $I$.

The Characterization~\ref{char:exbetti} induces a simple algorithmic process to find the corners of a $t$-strongly stable ideal $I$ (see \cite{AC7}). Let $I$ be generated in degrees $1\leq \ell_1 < \ell_2 < \cdots < \ell_r \leq n$.
If we set
\[m_{\ell_j} = \max\{\max(u)\,:\, u \in G(I)_{\ell_j}\},\]
for $j\in[r]$, then we can consider the following sequence associated to $I$:
\begin{equation}\label{eq:degseq1}
\widehat\ds(I) =\left( m_{\ell_1}-t(\ell_1-1)-1, 
\ldots, m_{\ell_r}-t(\ell_r-1)-1 \right).
\end{equation}
From (\ref{eq:degseq1}) we can construct a suitable subsequence of $\widehat\ds(I)$ that we call the \emph{degree-sequence} of $I$:
\begin{equation}\label{eq:degseq2}
\ds(I)=\left( m_{\ell_{i_1}}-t(\ell_{i_1}-1)-1, 
\ldots, m_{\ell_{i_q}}-t(\ell_{i_q}-1)-1 \right),
\end{equation}
with $\ell_1\leq \ell_{i_1} < \ell_{i_2}< \cdots <\ell_{i_q}= \ell_r$, and such that $\beta_{m_{\ell_{i_j}}-\ell_{i_j},\,m_{\ell_{i_j}}}(I)$ is an extremal Betti number of $I$, for $j\in [q]$.
The integer $q\le r$ is the number of the extremal Betti numbers of the $t$-stable ideal $I$.

Some numerical characterizations of the extremal Betti numbers of a $t$-spread strongly stable ideal have been given in \cite{ACF2,ACF1}. In particular, the following problem about the extremal Betti numbers of a $t$-strongly stable ideal has been solved in \cite[Theorem~4.4]{ACF2}.
  
\begin{problem}\label{prob:Betti}
Given three positive integers $n,r,t$, $r$ positive integers $a_1,\ldots,a_r$ and $r$ positive pairs of integers $(k_1,\ell_1),\ldots,(k_r,\ell_r)$, under which conditions there exists a $t$-spread strongly stable ideal $I$ of $S=K[x_1,\ldots,x_n]$ such that
\[
\beta_{k_1,k_1+\ell_1}(I)=a_1,\ldots,\beta_{k_r,k_r+\ell_r}(I)=a_r
\] 
are its extremal Betti numbers?
\end{problem}

The constructive methods used in the aforementioned papers to solve this problem have also been implemented in the package \texttt{TSpreadIdeals}.

\section{Computational aspects} \label{sec:3}

In this Section, we present some original algorithms to manage sets of $t$-spread monomials belonging to the special classes above defined. The correctness of these methods is theoretically proved.  

First, we show the simple construction of the $t$-shadow of a $t$-spread monomial. Then, we analyze the construction of $t$-strongly stable and $t$-lex sets of monomials. The procedures to manage $t$-lex sets are more simpler than the ones used to manage $t$-strongly stable sets. Thus, we illustrate some methods about $t$-lex sets, and then we will generalize them to the larger class of sets.

Moreover, we analyze some methods for counting (avoiding the construction) the monomials in $t$-lex and $t$-strongly stable sets.

Finally, we present some exhaustive examples about the construction and counting of monomials both for $t$-lex and $t$-strongly stable sets.

Our approach is purely combinatorial, and all the theoretical results are also translated into pseudocode. 

\subsection{Construction algorithms}\label{sec:3_1}
We start this subsection by showing the construction of the $t$-shadow of a set of $t$-spread monomials (Definition~\ref{def:shad}). First, we can reduce the problem to the simpler one of calculating the $t$-shadow of a $t$-spread monomial. Indeed, $\Shad_t(u_1,\ldots,u_r)=\bigcup_{i=1}^{r}{\Shad_t(u_i)}$.
Let $u=x_{i_1}x_{i_2}\cdots x_{i_d}\in M_{n,d,t}$. We observe that applying the definition to $u$ means working on $0$-spread monomials and then make the intersection with $M_{n,d+1,t}$. Our aim is to find a way to directly obtain $t$-spread monomials.

Let us consider the support of $u$: $\{i_1, i_2, \ldots, i_d\}$. Now, we replace each index in $\supp(u)$, $i_q$, with the two values $i_q-t$ and $i_q+t$, still preserving the positions. So, we have the following list $(i_1-t, i_1+t, i_2-t, i_2+t, \ldots, i_d-t, i_d+t)$. We insert $1$ before the first element and $n$ after the last element of the list. Hence, we have
\begin{equation}\label{eq:tshad1}
(1, i_1-t, i_1+t, i_2-t, i_2+t, \ldots, i_d-t, i_d+t, n).
\end{equation}
From this procedure, we are sure that the list in (\ref{eq:tshad1}) has an even number of elements: $2(d+1)$. 
Let us consider the following sets:
\begin{equation*}
[1, i_1-t], [i_1+t, i_2-t], [i_2+t,i_3-t], \ldots, [i_{d-1}+t, i_d-t], [i_d+t, n],
\end{equation*}
where
$[r,r]=\{r\}$, $[q,r]=\{q,q+1,q+2,\ldots,r-1,r\}$ if $q<r$ and $[q,r]=\emptyset$ if $q>r$. One can observe that they are $d+1$ sets containing the indexes we need to obtain the $\Shad_t(u)$. 
To clarify the notation, let us define
\begin{equation}\label{eq:tshad2}
\mathcal{I}=[1, i_1-t] \cup [i_1+t, i_2-t] \cup [i_2+t,i_3-t] \cup \cdots \cup [i_{d-1}+t, i_d-t] \cup [i_d+t, n].
\end{equation}
Hence, we can write $\Shad_t(u)=\{ux_h\ :\ h\in \mathcal{I}\}$.

\begin{example}
Let $S=K[x_1,\ldots,x_{16}]$ and $u=x_2x_5x_9x_{14}\in M_{12,4,2}$. We obtain the list of sets of indexes as in (\ref{eq:tshad2}):
\[
[1, 0]=\emptyset,\ [4, 3]=\emptyset,\ [7,7]=\{7\},\ [11, 12]=\{11,12\},\ [16, 16]=\{16\}.
\]
So, $ \mathcal{I}=\{7,11,12,16\}$, and, in order to get $\Shad_t(u)$, we just need to multiply $u$ by $\{x_7, x_{11}, x_{12}, x_{16}\}$, thus obtaining
\[
\Shad_t(x_2x_5x_9x_{14})=\{x_2x_5x_7x_9x_{14},\ x_2x_5x_9x_{11}x_{14},\ x_2x_5x_9x_{12}x_{14},\ x_2x_5x_9x_{14}x_{16}\}.
\]
\end{example}

The pseudocode in Algorithm~\ref{alg:tshad} is the algorithm for computing the $t$-shadow of a $t$-spread monomial. 

\begin{algorithm}
\scriptsize
\caption{Computation of the $\Shad_t(u)$ in $M_{n,d+1,t}$}
\label{alg:tshad}
\KwIn{Polynomial ring $S$, monomial $u$, positive integer $t$}
\KwOut{list of monomials $shad$}
\SetKw{Error}{error}
\Begin{
   \eIf{\texttt{isTSpread($u$,$t$)}}{
      $n \gets $ number of the variables of $S$\;
      $d \gets \deg(u)$\;
      $ind \gets \{1\}$\;  
      \For{$q \gets 1$ \KwTo $d$}{
         $ind \gets ind \cup \{i_q-t, i_q+t\}$\;
         $q \gets q+1$\;			
      }
      $ind \gets ind \cup \{n\}$\;
      $shad \gets \{\}$\;
      $r \gets 1$\;
      \While{$r < 2*(d+1)$}{
         \For{$q \gets ind(r)$ \KwTo $ind(r+1)$}{
            $shad \gets shad \cup \{u*x_q\}$\;			
         }
         $r \gets r+2$\;
      }
   }{
      \Error{expected a $t$-spread monomial}\;      
   }  
\Return $shad$\;
}
\end{algorithm}

Now, we pass to the computation of some particular sets of $t$-spread monomials: $t$-strongly stable and $t$-lex sets.
To make the reasoning as simple as possible, we will describe the case of the computation of the $t$-lex set generated by a $t$-spread monomial. To do this, we simply show the method to compute the \emph{$t$-lex successor} of a $t$-spread monomial, if such monomial exists.

%

The Proposition~\ref{prop:tnextlex} is a rearrangement of the one in  \cite[Proposition~3.9]{ACF2}. The proof is adapted in order to make the algorithm construction clearer. 
The following result allows to determine the \emph{$t$-lex successor} of a $t$-spread monomial $u$, \emph{i.e.}, the greatest $t$-spread monomial less than $u$.

\begin{proposition}\label{prop:tnextlex}
Let $n,d,t$ be three positive integers such that $1+(d-1)t\le n$. Let $u=x_{i_1}x_{i_2}\cdots x_{i_d}\in M_{n,d,t}$ be a $t$-spread monomial of $S$.
\begin{itemize}
\item[] Search the maximum index $q\in[d]$ such that $i_q+1\leq n-(d-q)t$;
\begin{itemize}
\item[(a)] if $q$ exists then the \emph{$t$-lex successor} of $u$ is the $t$-spread monomial
\[
x_{i_1}\cdots x_{i_{q-1}}x_{i_q+1}x_{i_q+1+t}\cdots x_{i_q+1+(d-q)t}\in M_{n,d,t};
\]
\item[(b)] if $q$ does not exist then $u$ is the smallest $t$-spread monomial of $M_{n,d,t}$.
\end{itemize}
\end{itemize}
\end{proposition}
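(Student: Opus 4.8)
The plan is to characterize the $t$-lex successor of $u$ directly in terms of the squarefree lexicographic order $>_{\slex}$, and then show that the monomial produced in case (a) is exactly the largest $t$-spread monomial strictly below $u$. First I would recall that a $t$-spread monomial $w = x_{j_1}\cdots x_{j_d}$ with $w <_{\slex} u$ must agree with $u$ in some initial segment of indices $j_1 = i_1, \ldots, j_{s-1} = i_{s-1}$ and then have $j_s > i_s$ at the first place of disagreement. To make $w$ as large as possible among such monomials, one wants the agreement segment as long as possible, the jump at position $s$ as small as possible (namely $j_s = i_s + 1$), and, once $j_s$ is fixed, the remaining indices $j_{s+1}, \ldots, j_d$ as small as the $t$-spread condition permits, i.e.\ $j_{s+1} = j_s + t$, \ldots, $j_d = j_s + (d-s)t$. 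This forces the candidate successor to have the shape displayed in (a), with $q$ playing the role of $s$.

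The key point is then to determine, for a given position $q$, whether the monomial
\[
x_{i_1}\cdots x_{i_{q-1}}\,x_{i_q+1}\,x_{i_q+1+t}\cdots x_{i_q+1+(d-q)t}
\]
is in fact an element of $M_{n,d,t}$. The initial block $x_{i_1}\cdots x_{i_{q-1}}$ is $t$-spread since it is a sub-product of $u$; the new index $i_q+1$ still satisfies $i_q + 1 - i_{q-1} \ge t$ because $i_q - i_{q-1} \ge t$; and the tail is $t$-spread by construction. So the only genuine constraint is that the largest index, $i_q + 1 + (d-q)t$, does not exceed $n$, which is precisely the inequality $i_q + 1 \le n - (d-q)t$ in the statement. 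Thus the set of admissible positions $q$ is exactly $\{q \in [d] : i_q + 1 \le n - (d-q)t\}$, and choosing the \emph{maximum} such $q$ yields the longest possible agreement segment with $u$, hence the $>_{\slex}$-largest valid successor. I would then argue minimality of the jump and of the tail: any $t$-spread $w <_{\slex} u$ agreeing with $u$ up to position $q-1$ satisfies $w \le_{\slex}$ the displayed monomial, because at position $q$ we have $j_q \ge i_q + 1$ with equality in our candidate, and from position $q$ onward the candidate's indices are componentwise minimal among $t$-spread completions; and any $w$ agreeing with $u$ only up to a position $< q-1$ is automatically $<_{\slex}$ our candidate. This also handles case (b): if no $q \in [d]$ satisfies the inequality, then in particular $q = d$ fails, i.e.\ $i_d + 1 > n$, so $i_d = n$; propagating this failure down the positions (using that $i_{q} \ge i_{q-1} + t$ forces $i_q$ to already be maximal whenever $q+1, \ldots, d$ are maximal) shows $u = x_{n-(d-1)t}\cdots x_{n-t}x_n$, which is visibly $\min M_{n,d,t}$ with respect to $>_{\slex}$.

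The main obstacle I anticipate is the careful bookkeeping in the last step — showing that the nonexistence of an admissible $q$ forces $u$ to be the unique $\slex$-minimal monomial, i.e.\ that the failure of the inequality at every position propagates correctly given the $t$-spread gaps. One has to verify that $i_q + 1 > n - (d-q)t$ for all $q$ is equivalent to $i_q = n - (d-q)t$ for all $q$, using the lower bounds $i_q - i_{q-1} \ge t$ together with $i_q \le n - (d-q)t$ (the latter being exactly $t$-spreadness of $u$ read off from the tail). Everything else is a direct, if slightly tedious, comparison in the squarefree lexicographic order, and since the argument is constructive it translates immediately into the pseudocode that follows.
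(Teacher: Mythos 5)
Your proposal is correct and follows essentially the same route as the paper: construct the candidate monomial from the maximal admissible position $q$, observe that the only obstruction to it lying in $M_{n,d,t}$ is the inequality $i_q+1+(d-q)t\le n$, argue maximality through the structure of $>_{\slex}$ (longest agreement prefix, minimal jump, componentwise-minimal tail), and reduce case (b) to $i_s=n-(d-s)t$ for all $s$. The only point worth making fully explicit is that a $t$-spread $w<_{\slex}u$ cannot first disagree with $u$ at a position $s>q$ — this follows from the same observation you already use for (b), namely $i_s\le n-(d-s)t$ combined with the maximality of $q$ — which is precisely what the paper's phrase ``because of the maximality of $q$'' is covering.
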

\begin{proof} Let us consider the set $F=\left\{s\in[d]\, :\, i_s+1\leq n-(d-s)t\right\}$. If $F\neq\emptyset$ then it is possible to get the maximum of $F$, that is, the case $(a)$ holds true. Let $q=\max F$. Hence, we can construct the $t$-spread monomial $w=x_{i_1}\cdots x_{i_{q-1}}x_{i_q+1}x_{i_q+1+t}\cdots x_{i_q+1+(d-q)t}$. Indeed, by hypothesis, $i_q+1\leq n-(d-q)t$, and then we have that $i_q+1+(d-q)t\leq n$.
Because of the maximality of $q$, it is not possible to do the same reasoning starting from the index $i_{q+1}$, hence, the monomial $w$ has the smallest indexes that allow such a construction. Moreover, the calculation $i_q+(d-s)t-i_q-(d-s-1)t=t$, for $s=q,\ldots,d$, assures that $w$ is a $t$-spread monomial of degree $d$.
The arguments made so far imply that $w$ is the greatest monomial less than $u$ in $M_{n,d,t}$, with respect to $>_\slex$ order, that is, $w$ is the $t$-lex successor of $u$.

On the other hand, if $F=\emptyset$, then it is not possible to get the maximum of $F$, that is, the case $(b)$ holds true. Hence, for all $s\in[d]$ we have that $i_s+1> n-(d-s)t$, 
say, $i_s+1+(d-s)t > n$. This means that we cannot replace any indeterminate with any other having a larger index. There does not exist a $t$-spread monomial smaller than $u$ with respect to $>_\slex$.
\end{proof}

\begin{example}
Let $S=K[x_1,\ldots,x_{13}]$ and let $u=x_2x_6x_{10}x_{13}\in M_{13,4,3}$. In such a case, $q=\max F=\max\{1,2\}=2$. 
This fact ensures the construction of the $t$-lex successor of $u$: $w=x_2x_7x_{10}x_{13}\in M_{13,4,3}$.\\
On the contrary, taking the monomial $v=x_4x_7x_{10}x_{13}\in M_{13,4,3}$,  we have that $F=\emptyset$.
Indeed, $v$ is the smallest $3$-spread monomial of $S$.
\end{example}

The procedures used in the Proposition~\ref{prop:tnextlex} guarantee the correctness of the  
Algorithm~\ref{alg:tnextlex}. We illustrate it using the same notation as in the proposition.
	
\begin{algorithm}
\scriptsize
\caption{Computation of the $t$-lex successor of $u$ in $M_{n,d,t}$}
\label{alg:tnextlex}
\KwIn{Polynomial ring $S$, monomial $u$, positive integer $t$}
\KwOut{monomial $w$}
\SetKw{Error}{error}
\Begin{
   \eIf{\texttt{isTSpread($u$,$t$)}}{
      $m \gets $ number of the variables of $S$\;
      $q \gets \deg(u)$\;   
      \While{$i_q+1 > m$}{
         $m \gets m-t$\;
         $q \gets q-1$\;
         \If{$q<0$}{
            \Error{no monomial}\;      
         }			
      }
      $w \gets x_{i_1}*\cdots * x_{i_{q-1}}$\;
      $m \gets m+1$\;
      \While{$q \leq \deg(u)$}{
         $w \gets w*x_{m}$\;
         $m \gets m+t$\;
         $q \gets q+1$\;
      }
   }{
      \Error{expected a $t$-spread monomial}\;      
   }  
\Return $w$\;
}
\end{algorithm}

\begin{remark}\rm\label{rem:tlexseg}
The method described in Algorithm~\ref{alg:tnextlex} can be useful to compute the initial $t$-lex segment generated by a monomial $u$ of $M_{n,d,t}\subset S$. To obtain this result, one has to consider the greatest $t$-spread monomial of $M_{n,d,t}$: $x_1x_{1+t}x_{1+2t}\cdots x_{1+(d-1)t}$. After which, one can use iteratively the algorithm until reaching $u$. So, the monomials built in this way, including $u$, are all the monomials belonging to $L_t\{u\}$.
More generally, as we have already seen, $L_t\{u_1,\ldots,u_r\}=L_t\{u_r\}$ when $u_r$ is the smallest monomial in the set $\{u_i\}$, $i=1,\ldots,r$. 

Furthermore, it is possible to compute the $t$-lex segment identified by two monomials (changing the starting monomial), $L_t[v,u]$. A particular case is the \emph{$t$-spread Veronese} set, $M_{n,d,t}=L_t[\max M_{n,d,t},\min M_{n,d,t}]$. In fact, it is the $t$-lex segment whose ``extrema'' are $x_1x_{1+t}x_{1+2t}\cdots x_{1+(d-1)t}$ and $x_{n-(d-1)t}\cdots x_{n-2t}x_{n-t}x_n$.
\end{remark}

Now, what has been done previously suggests, \emph{mutatis mutandis}, how to approach the computation of $t$-strongly stable sets of monomials.

Some comments are in order. Unlike the $t$-lex case, the construction of the monomials in a $t$-strongly stable set depends on both the starting monomial and the final one. So, it is not possible to have a function with only one parameter corresponding to the $t$-lex successor. We believe that the best way is to tackle the problem in its most general form.

More technically, we have observed in Remark~\ref{rem:tlexseg} that in order to compute $L_t\{u\}$, $u\in M_{n,d,t}$, we start from $\max M_{n,d,t}$ in order to get $u$ by using the Algorithm~\ref{alg:tnextlex}. The monomial $u$ is used only to determine the end of the iterations. Indeed, the algorithm only exploits the structure of the monomial for which we want to find the $t$-lex successor.

In the computation of the $t$-strongly stable set $B\{u\}$, we also start from $\max M_{n,d,t}$ (see Remark~\ref{rem:tstrongseg}) to arrive at $u$ but, in such a case, the structure of the monomial $u$ has to be continuously taken into account to build all the needed monomials (Characterization~\ref{char:strongly}). For this reason, we will start analyzing the $t$-strongly stable set identified by two $t$-spread monomials (the greatest monomial and the smaller one).

More in detail, let $S=K[x_1,\ldots,x_n]$ be a polynomial ring over a field $K$, and let $N=\{u_1,\ldots,u_r\}\subset M_{n,d,t}$ be a set of $t$-spread monomials of $S$. We observe that $B_t\{N\}=\bigcup_{i=1}^{r}{B_t\{ u_i\}}$; hence, we can describe the singleton case $B_t\{u\}\subset M_{n,d,t}$ without loss of generality. Moreover, we recall that $B_t\{u\}=B_t[\max M_{n,d,t},u]=$ $B_t[x_1x_{1+t}\cdots x_{1+(d-1)t},u]$. So, we can face the problem to compute the $t$-strongly stable segment $B_t[v,u]$, where $v>_\Borel u$, to encompass all cases.

The intuitive way to face this computation is to apply the definition of $t$-strongly stability to the monomials and then to select all the $t$-spread monomials from the result. The drawback of this method is the slowness and the involvement of a large number of monomials, most of which will be discarded. Thence, there is a waste of time and resources, and this imposes limitations on the initial parameters, for instance, on the maximum of the supports of the involved monomials. 

So, to make the process faster we can work directly with a suitable sequence of $t$-spread monomials. This idea is similar to that used for the $t$-lex successor. In such a case, we need to find a method sending a $t$-spread monomial to the next one that belongs to the same $t$-strongly stable set. If we consider the lex order $>_\Borel$, from Remark~\ref{rem:ssmaxmin}, we can start our reasoning from the greatest $t$-spread monomial of $B_t[v,u]$, $v$, and reach step by step the smallest one, say $u$.
The effectiveness of this procedure relies on the possibility of suitably manipulating the indexes of a $t$-spread monomial.

Proposition~\ref{prop:tnextstrong} and Remark~\ref{rem:tstrongseg} solve the problem.

\begin{proposition}\label{prop:tnextstrong}
Let $n,d,t$ be three positive integers such that $1+(d-1)t\le n$. Let $v=x_{j_1}x_{j_2}\cdots x_{j_d}$ and $u=x_{i_1}x_{i_2}\cdots x_{i_d}\in M_{n,d,t}$ be two $t$-spread monomials of $S$ such that $v\neq u$ and $v\geq_\Borel u$.\\
Let $q\in[d]$ be the maximum index such that $j_q+1\leq i_q$. Then, the $t$-spread monomial
\[
w=x_{j_1}\cdots x_{j_{q-1}}x_{j_q+1}x_{j_q+1+t}\cdots x_{j_q+1+(d-q)t}\in M_{n,d,t}
\]
belongs to the $t$-strongly stable segment $B_t[v,u]$, and $w$ is the greatest monomial of $B_t[v,u]$, with respect to $>_\slex$, except $v$.
\end{proposition}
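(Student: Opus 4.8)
The plan is to mirror the structure of the proof of Proposition~\ref{prop:tnextlex}, but replacing the purely order-theoretic reasoning with Borel-order reasoning, since $t$-strongly stable segments are governed by $>_\Borel$ via Characterization~\ref{char:strongly}. First I would verify that $q = \max\{s\in[d] : j_s+1 \le i_s\}$ is well defined: since $v\ne u$ and $v\ge_\Borel u$, we have $j_s\le i_s$ for all $s$ with at least one strict inequality, so the set is nonempty. Next I would check that $w = x_{j_1}\cdots x_{j_{q-1}}x_{j_q+1}x_{j_q+1+t}\cdots x_{j_q+1+(d-q)t}$ is a genuine $t$-spread monomial of degree $d$ in $M_{n,d,t}$: the gaps among the last $d-q+1$ exponents are exactly $t$ by the telescoping computation $\bigl(j_q+1+(s+1-q)t\bigr)-\bigl(j_q+1+(s-q)t\bigr)=t$, the gap $j_q+1 - j_{q-1}\ge j_q - j_{q-1}\ge t$ holds since $v$ is $t$-spread, and the top exponent satisfies $j_q+1+(d-q)t \le i_q + (d-q)t \le i_d \le n$ (using $j_q+1\le i_q$ and that $u$ is $t$-spread, so $i_q+(d-q)t\le i_d$).

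The core of the argument is to show (i) $w\in B_t[v,u]$, i.e. $v\ge_\Borel w\ge_\Borel u$ — equivalently, since $B_t[v,u]=\{z\in B_t\{u\}: v\ge_\slex z\}$ and $B_t\{u\}$ consists of all $z\ge_\Borel u$, one must show $w\ge_\Borel u$ and $v\ge_\slex w$ — and (ii) that $w$ is $\slex$-maximal in $B_t[v,u]\setminus\{v\}$. For $w\ge_\Borel u$: the first $q-1$ exponents of $w$ and $v$ agree and are $\le$ the corresponding exponents of $u$; the $q$-th exponent $j_q+1\le i_q$ by choice of $q$; and for $s>q$ the exponent of $w$ is $j_q+1+(s-q)t\le i_q+(s-q)t\le i_s$, again because $u$ is $t$-spread. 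For $v\ge_\slex w$: the monomials agree in positions $1,\dots,q-1$ and $w$ has larger $q$-th exponent, so $v>_\slex w$. The $\slex$-maximality in (ii) is where the maximality of $q$ does the work: any $z\in B_t[v,u]$ with $v>_\slex z$ must differ from $v$ in some position, and if $z\ge_\slex w$ fails then $z$ agrees with $v$ through position $q-1$ but, at position $q$, either equals $v$ (forcing the difference to occur later, but then at that later position $z$'s exponent exceeds $v$'s, contradicting $v\ge_\Borel z$ combined with $z\ge_\Borel u$... ) — more cleanly, I would argue that any $t$-spread $z$ with $v\ge_\Borel z\ge_\Borel u$, $z\ne v$, and $z>_\slex w$ would have to agree with $v$ in positions $1,\dots,q$ (since at position $q$ it cannot drop below $v$ without going below $w$, and it cannot exceed $v$ since $v\ge_\Borel z$), then be determined from position $q+1$ onward to be $t$-spread with all later exponents $\ge$ those forced minimally, which makes $z=v$ after noting the maximality of $q$ forbids decreasing any of the first $q$ exponents — contradiction.

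The main obstacle I anticipate is cleanly establishing the $\slex$-maximality claim (ii): one must rule out the existence of a $t$-spread monomial strictly between $w$ and $v$ in $\slex$ order that still lies in $B_t[v,u]$, and this requires carefully combining the Borel constraint $v\ge_\Borel z$ (which caps exponents from above) with $z\ge_\Borel u$ (which caps them from below via $u$'s $t$-spread structure) and the maximality of $q$. The cleanest route is probably a direct lex-comparison argument: let $z$ be $t$-spread with $v>_\slex z$ and $z>_\slex w$; compare position by position. Since $z>_\slex w$ and $w$ agrees with $v$ up to position $q-1$, either $z$ agrees with $v$ (hence $w$) in positions $1,\dots,q-1$ and has $q$-th exponent strictly between $j_q$ and $j_q+1$ (impossible, as these are consecutive integers, forcing the $q$-th exponent to equal $j_q$, i.e. $z$ agrees with $v$ through position $q$), or $z$ disagrees with $v$ earlier — but then $v>_\slex z$ forces $z$ to have a larger exponent somewhere among positions $1,\dots,q-1$, contradicting $v\ge_\Borel z$. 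So $z$ agrees with $v$ through position $q$; then for $z$ to be $t$-spread of degree $d$ with $v\ge_\Borel z$, and using that position $q$ already equals $j_q$ while $v$'s tail is itself $t$-spread, one shows the only possibility is $z=v$. I would write this out as the crux of the proof, with the $t$-spread verification of $w$ and the two Borel inequalities in (i) handled as routine telescoping-and-inequality bookkeeping.
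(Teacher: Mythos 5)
Your construction of $w$ and the routine verifications (that $w$ is $t$-spread with largest index at most $n$, that $w\ge_\Borel u$, and that $v>_\slex w$) coincide with the paper's argument, and you rightly identify the $\slex$-maximality of $w$ in $B_t[v,u]\setminus\{v\}$ as the crux (the paper is in fact terser there than you are). The genuine problem is that your maximality argument twice invokes $v\ge_\Borel z$ for an arbitrary competitor $z\in B_t[v,u]$. That hypothesis is not available: by definition $B_t[v,u]=\{z\in B_t\{u\}:v\ge_\slex z\}$, so membership only gives $z\ge_\Borel u$ and $v\ge_\slex z$, and $v\ge_\Borel z$ is false in general --- in the paper's own example $x_2x_4x_6\in B_2[x_1x_5x_7,\,x_2x_5x_8]$ although $x_1x_5x_7\not\ge_\Borel x_2x_4x_6$ (compare the second indexes). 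Consequently the branch ``$z$ disagrees with $v$ before position $q$, contradicting $v\ge_\Borel z$'' and the concluding step ``with $v\ge_\Borel z$ \dots the only possibility is $z=v$'' do not stand as written.

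Both branches can be repaired, but the second needs an idea your sketch does not supply, and it is exactly where the maximality of $q$ enters. If $z$ first differs from $v$ at a position $p\le q-1$, then $z$ agrees with $w$ up to position $p-1$ and $w_p=j_p<z_p$, so $w>_\slex z$, contradicting $z>_\slex w$; no Borel comparison with $v$ is needed. If $z$ agrees with $v$ up to position $q-1$, then $v>_\slex z$ and $z>_\slex w$ force $z_q\in\{j_q,\,j_q+1\}$; your ``strictly between consecutive integers'' phrasing overlooks the case $z_q=j_q+1$, which is excluded because the tail of $w$ is index-minimal, so $w\ge_\slex z$ for every $t$-spread $z$ sharing its first $q$ indexes. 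In the remaining case $z_q=j_q$, the maximality of $q$ gives $j_s+1>i_s$ for all $s>q$, which combined with $v\ge_\Borel u$ yields $j_s=i_s$ for all $s>q$; since $z\ge_\Borel u$, every later index of $z$ satisfies $z_s\le i_s=j_s$, whereas $v>_\slex z$ with agreement through position $q$ would require $z_p>j_p$ at the first disagreement $p>q$ --- a contradiction, so $z=v$. This identity $j_s=i_s$ for $s>q$ is the missing key step: maximality of $q$ constrains the positions after $q$, not (as you write) ``the first $q$ exponents,'' and without this repair your proof of the maximality claim does not go through.
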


\begin{proof} Let $F=\left\{s\in[d]\, :\, j_s+1\leq i_s\right\}$. From the hypothesis $v>_\Borel u$, it is $F\neq\emptyset$. So, let $q=\max F\in[d]$. Under these conditions, we show that the monomial $w=x_{j_1}\cdots x_{j_{q-1}}x_{j_q+1}x_{j_q+1+t}\cdots x_{j_q+1+(d-q)t}$ exists. Indeed, from the hypotheses and starting from $j_q+1\leq i_q$, we have the following inequalities:
\[
\begin{array}{ccccc}
j_q+1+t & \leq & i_q+t & \leq & i_{q+1},\\
j_q+1+2t & \leq & i_q+2t & \leq & i_{q+2},\\
& & \vdots & &\\
j_q+1+(d-s)t & \leq & i_q+(d-s)t & \leq & i_{q+d-s},\\
& & \vdots & &\\
j_q+1+(d-q)t & \leq & i_q+(d-q)t & \leq & i_{d}.
\end{array}
\]

These results guarantee the existence of $w$ as $t$-spread monomial of $M_{n,d,t}$. Furthermore, comparing the indexes of $w$ with those of $u$, one has that $w\geq_\Borel u$.
From the Characterization \ref{char:strongly}, we can deduce that $w$ is a monomial of the $t$-strongly stable set generated by $u$.

From the choice of $q$, $w$ has the smallest indexes for which such a construction is possible. Hence, $w$ is the greatest monomial less than $v$ in $B_t[v,u]$, with respect to the $>_\slex$ order. The thesis holds true.
\end{proof}

\begin{remark}\label{rem:tstrongseg}\rm
With the same notations of the Proposition~\ref{prop:tnextstrong}, the monomial $w_1=w$ is constructed to be less than $v$ and greater or equal to $u$.\\
So, in order to obtain all the monomials of the $t$-strongly stable segment $B_t[v,u]$, we can iterate the construction in the Proposition~\ref{prop:tnextstrong} by replacing the monomial $w_1$ with the monomial $v$ since $w_1\geq_\Borel u$. Hence, we can apply the proposition to $B_t[w_1,u]$. Indeed, all the hypotheses of the Proposition~\ref{prop:tnextstrong} are satisfied. This process allows to find a set of $t$-spread monomials ${w_1,w_2,\ldots,w_s}$ such that $w_i\geq_\Borel u$. From the construction of $w$, it is clear that $u$ will be obtained in this way. In such a case, when $w_s=u$ the proposition can no longer be applied. This is the end point of the iterations. Indeed, the construction of the monomials $w_i$ complies with the characterization of $t$-strongly stability. So, $B_t[v,u]= \{v,w_1,w_2,\ldots,w_s=u\}$.  
\end{remark}

The following example clarifies the calculation of a $t$-strongly stable segment identified by two monomials.

\begin{example}
Let $S=K[x_1,\ldots,x_9]$ and let $v=x_1x_5x_7$, $u=x_2x_5x_8\in M_{9,3,2}$. We want to compute $B_2[v,u]$.

Using the methods in Proposition~\ref{prop:tnextstrong}, we obtain $q_1=\max \{1,3\}=3$ and $w_1=x_1x_5x_8$. Applying the algorithm described in Remark~\ref{rem:tstrongseg}, we can repeat the procedure considering $B_t[w_1,u]$. Iterating the process, we go through the following steps:
\[
\begin{array}{ll}
q_1=\max \{1,3\}=3 &\quad-\qquad w_1=x_1x_5x_8,\\
q_2=\max \{1\}=1 &\quad-\qquad w_2=x_2x_4x_6,\\
q_3=\max \{2,3\}=3 &\quad-\qquad w_3=x_2x_4x_7,\\
q_4=\max \{2,3\}=3 &\quad-\qquad w_4=x_2x_4x_8,\\ 
q_5=\max \{2\}=2 &\quad-\qquad w_5=x_2x_5x_7,\\ 
q_6=\max \{3\}=3 &\quad-\qquad w_6=x_2x_5x_8=u.
\end{array}
\]

Hence, we obtain the segment:
\[
B_2[v,u]=\{x_{1}x_{5}x_{7},\,x_{1}x_{5}x_{8},\,x_{2}x_{4}x_{6},\,x_{2}x_{4}x_{7},\,x_{2}x_{4}x_{8},\,x_{2}x_{5}x_{7},\,x_{2}x_{5}x_{8}\}.
\]
It is interesting to observe that if we consider $v=x_1x_5x_7$, $\overline u=x_2x_5x_7\in M_{11,3,2}$, we get
\[
B_2[v,\overline u]=\{x_{1}x_{5}x_{7},\,x_{2}x_{4}x_{6},\,x_{2}x_{4}x_{7},\,x_{2}x_{5}x_{7}\}.
\] 
Furthermore, if $v=x_1x_5x_7$, $\widetilde u=x_2x_4x_8\in M_{11,3,2}$, then the assumptions of the Proposition~\ref{prop:tnextstrong} are not valid. Indeed, $v \ngeq_\Borel u$, \emph{i.e.}, $v$ does not belong to $B_2\{\widetilde u\}$. So, $B_2[v,\widetilde u]=\emptyset$. 
\end{example}


The algorithm arising from Proposition~\ref{prop:tnextstrong} and  Remark~\ref{rem:tstrongseg} is described through the pseudocode in Algorithm~\ref{alg:tnextstrong}.
	
\begin{algorithm}
\scriptsize
\caption{Computation of the $t$-strongly stable segment $B_t[v,u] \subset M_{n,d,t}$}
\label{alg:tnextstrong}
\KwIn{Polynomial ring $S$, monomials $v,u$, positive integer $t$}
\KwOut{list of monomials $l$}
\SetKw{Error}{error}
\Begin{
   \eIf{\texttt{isTSpread($\{v,u\}$,$t$)} and $v \geq_\Borel u$}{
      $l \gets \{v\}$\;      
      \While{$w \neq u$}{
         $q \gets \deg(v)$\;   
         \While{$i_q+1 > j_q$}{
            $q \gets q-1$\;			
         }
         $w \gets x_{i_1}*\cdots * x_{i_{q-1}}$\;
         $m \gets i_q+1$\;
         \While{$q \leq \deg(v)$}{
            $w \gets w*x_{m}$\;
            $m \gets m+t$\;
            $q \gets q+1$\;
         }
         $l \gets l \cup w$\;
      }
   }{
      \Error{expected $t$-spread monomials belonging to $B_t\{u\}$}\;      
   }  
\Return $l$\;
}
\end{algorithm}

\subsection{Counting algorithms}\label{sec:3_2}

An interesting subject from a combinatorial point of view is to compute the cardinality of both $t$-lex and $t$-strongly stable sets.
We will focus our attention on the sets $L_t\{u\}=L_t[\max M_{n,d,t},u]\subset M_{n,d,t}$ and $B_t\{u\}=B_t[\max M_{n,d,t},u]\subset M_{n,d,t}$.
The procedures are similar to those already used in \cite{AC7,ACF2}, \emph{i.e.}, they work by adding suitable binomial coefficients. Let us recall some arguments from the aforementioned papers to get the desired results also in this case.

\begin{lemma}\label{lem:decomp}
Let $n,q$ be positive integers such that $n\geq q$. Then
\begin{equation}\label{eq:decomp}
\binom{n}{q}=\binom{n-1}{q-1}+\binom{n-2}{q-1}+\cdots+\binom{q-1}{q-1}.
\end{equation}
\end{lemma}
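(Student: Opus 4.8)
The identity in \eqref{eq:decomp} is the classical ``hockey-stick'' identity, and the natural approach is induction on $n$ (with $q$ fixed). The base case $n=q$ reads $\binom{q}{q}=\binom{q-1}{q-1}$, which is simply $1=1$, so it holds. For the inductive step, assume the identity holds for $n-1\geq q$, that is,
\[
\binom{n-1}{q}=\binom{n-2}{q-1}+\binom{n-3}{q-1}+\cdots+\binom{q-1}{q-1}.
\]
Then I would apply Pascal's rule $\binom{n}{q}=\binom{n-1}{q}+\binom{n-1}{q-1}$ and substitute the inductive hypothesis for $\binom{n-1}{q}$, obtaining
\[
\binom{n}{q}=\binom{n-1}{q-1}+\Big(\binom{n-2}{q-1}+\binom{n-3}{q-1}+\cdots+\binom{q-1}{q-1}\Big),
\]
which is exactly the right-hand side of \eqref{eq:decomp}. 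This closes the induction.

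Alternatively, one can give a one-line telescoping argument: repeatedly apply Pascal's rule $\binom{m}{q}=\binom{m-1}{q}+\binom{m-1}{q-1}$ to the top binomial, peeling off one term $\binom{m-1}{q-1}$ at each stage, starting from $m=n$ and stopping when the remaining $\binom{q-1}{q}=0$ term vanishes; the collected terms $\binom{n-1}{q-1},\binom{n-2}{q-1},\dots,\binom{q-1}{q-1}$ are precisely the summands on the right. A combinatorial proof is also available: $\binom{n}{q}$ counts the $q$-subsets of $[n]$, and partitioning these subsets according to their largest element $m\in\{q,q+1,\dots,n\}$ gives $\binom{m-1}{q-1}$ subsets for each such $m$, which after reindexing yields the stated sum.

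There is essentially no obstacle here — the identity is elementary and the induction is routine. The only minor care needed is bookkeeping: making sure the index range of the sum on the right-hand side is handled correctly in the base case and that the final term $\binom{q-1}{q-1}$ matches up after applying Pascal's rule in the inductive step. I would present the induction as the main proof since it is the cleanest to write out rigorously.
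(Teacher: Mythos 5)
Your proof is correct: the induction on $n$ with Pascal's rule is sound, the base case $n=q$ is handled properly, and both the telescoping and the combinatorial ``largest element'' arguments you sketch are valid alternatives. Note, however, that the paper itself offers no proof of this lemma at all — it is immediately followed by a remark stating that the relation is an elementary decomposition of binomial coefficients, recalled only because it is used in the sequel (to justify the counting formulas for $t$-lex and $t$-strongly stable sets). So there is nothing to compare against: any of your three arguments would serve as a complete justification, and the inductive one you designate as the main proof is the cleanest to write out, exactly as you say. The only substantive connection to the paper worth noting is that your combinatorial reading (classifying $q$-subsets by their largest — or, dually, smallest — element) is precisely the interpretation the paper exploits later in Remark 3.4 and Theorems 3.5 and 3.9, where each summand $\binom{n-(d-1)(t-1)-s}{d-1}$ is identified with the set of $t$-spread monomials having prescribed minimal index $s$; so your third argument is the one most in the spirit of how the identity is actually used.
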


\begin{remark}
Relation~\ref{eq:decomp} is an elementary decomposition of binomial coefficients. We just recall it since it is used in the sequel.
\end{remark}

As can be seen in \cite[Remark~3.6]{ACF1}, it will be useful to analyze the cardinality of the set to which the monomials belong to. We start analyzing the set $M_{n,d,t}$, and the following remark clarifies some aspects about the application of the Lemma~\ref{lem:decomp}.

\begin{remark}\label{rem:decomp}\rm
We recall that $|M_{n,d,t}|=\binom{n-(d-1)(t-1)}{d}$. Applying the formula (\ref{eq:decomp}), we obtain the following \emph{binomial decomposition}:
\begin{equation}\label{eq:Vdecomp1}
\begin{aligned}
& \binom{n-(d-1)(t-1)}{d}=\sum_{s=1}^{n-(d-1)t}{\binom{n-(d-1)(t-1)-s}{d-1}}=\\
& =\binom{n-(d-1)(t-1)-1}{d-1}+
\cdots+\binom{d-1}{d-1}.
\end{aligned}
\end{equation}
The decomposition in (\ref{eq:Vdecomp1}) has $n-(d-1)t$ contributions, each representing the number of $t$-spread monomials $w=x_{j_1}x_{j_2}\cdots x_{j_d}$ such that $j_1=\min(w)=s$, for $s=1,\ldots,n-(d-1)t$. The last value of $s$ is determined by the fact that $\max M_{n,d,t}=x_1x_{1+t}\cdots x_{1+(d-2)t}x_{1+(d-1)t}x_n$ and $\min M_{n,d,t}=x_{n-(d-1)t}x_{n-(d-2)t}\cdots x_{n-2t}x_{n-t}x_n$, that is, exceeding the value $n-(d-1)t$, starting from $1$, for the first index of the support it is not possible to build a $t$-spread monomial.

In a similar way, for a fixed index $s_1$ in the sum in (\ref{eq:Vdecomp1}),
we can write the further following binomial decomposition:
\begin{equation}\label{eq:Vdecomp2}
\begin{aligned}
& \binom{n-(d-1)(t-1)-s_1}{d-1}=\sum_{s=1}^{n-(d-1)t-s_1+1}{\binom{n-(d-1)(t-1)-s_1-s}{d-2}}\\
& =\binom{n-(d-1)(t-1)-s_1-1}{d-2}+
\cdots+\binom{d-2}{d-2}.
\end{aligned}
\end{equation}

Analogously, the binomial decomposition in (\ref{eq:Vdecomp2}) counts the number of monomials $w$ of $M_{n,d,t}$ such that $j_1=s_1$, for each $s=1,\ldots,n-(d-1)t-s_1+1$. In such a case, to analyze the last value of $s$ we can note that the greatest of such monomials with $j_1=s_1$ is $x_{s_1}x_{s_1+t}x_{s_1+2t}\cdots x_{s_1+(d-2)t}x_{s_1+(d-1)t}$ and the smallest one is $x_{s_1}x_{n-(d-2)t}\cdots x_{n-2t}x_{n-t}x_n$. Comparing the second indexes of the two monomials, we find that $j_2$ can assume the values from $s_1+t$ to $n-(d-2)t$. So, the index $s$ can assume $n-(d-2)t-(s_1+t)+1$ values, that is, $n-(d-1)t-s_1+1$.

Let us do the following position $s_{[k]}=\sum_{p=1}^{k}{s_p}=s_1+s_2+\cdots+s_k$. This notation will make more readable some formulas.

For the sake of clarity, at this step, we observe that fixing an index $s_2$, we have the possibility to count all the $t$-spread monomials with $j_1=s_1$ and $j_2=s_{[2]}+t-1$.

Again, as done in the previous case, we can fix an index $s=s_2$ in (\ref{eq:Vdecomp2}), and consider the next binomial decomposition:

\begin{equation}\label{eq:Vdecomp3}
\begin{aligned}
& \binom{n-(d-1)(t-1)-s_{[2]}}{d-2}=\sum_{s=1}^{n-(d-1)t-s_{[2]}+2}{\binom{n-(d-1)(t-1)-s_{[2]}-s}{d-3}}\\
& =\binom{n-(d-1)(t-1)-s_{[2]}-1}{d-3}+
\cdots+\binom{d-3}{d-3}.
\end{aligned}
\end{equation}
In this case, we are considering the monomials with $j_1=s_1$ and $j_2=s_{[2]}+t-1$. The greatest of these is $x_{s_1}x_{s_{[2]}+t-1}x_{s_{[2]}+2t-1}\cdots x_{s_{[2]}+(d-2)t-1}x_{s_{[2]}+(d-1)t-1}$ and the smallest $x_{s_1}x_{s_{[2]}+t-1}x_{n-(d-3)t}\cdots x_{n-t}x_{n}$. So, the index $s$ of (\ref{eq:Vdecomp3}) can assume $n-(d-1)t-s_1-s_2+2$ values. Finally, we note that the binomial coefficient of the decomposition with $s=s_3$ counts the number of monomials with $j_1=s_1$, $j_2=s_{[2]}+t-1$ and $j_3=s_{[3]}+2t-2$.

The procedure can be iterated for the other remaining indexes $s_3, s_4, \ldots, s_r$, with similar interpretations.
In order to make reading clearer, we show in Table~\ref{fig:corrind} some correspondences between the summation indexes and the indexes of the monomial.
\begin{table}
\scriptsize
\[
\begin{array}{|ccccc|ccccc|}
\hline
s_1,& s_2,& \ldots,& s_{d-2},& s_{d-1} & j_1,& j_2,& \ldots,& j_{d-2},& j_{d-1} \\
\hline
1,& 1,& \overset{1,\ldots,1}{\ldots},& 1,& 1 & 1,& 1+t,& \ldots,& 1+(d-3)t,& 1+(d-2)t\\
1,& 1,& \ldots,& 1,& 2 & 1,& 1+t,& \ldots,& 1+(d-3)t,& 2+(d-2)t\\
1,& 1,& \ldots,& 1,& 3 & 1,& 1+t,& \ldots,& 1+(d-3)t,& 3+(d-2)t\\
& & \vdots & & & & & \vdots & &\\
1,& 1,& \ldots,& 2,& 1 & 1,& 1+t,& \ldots,& 2+(d-3)t,& 2+(d-2)t\\
1,& 1,& \ldots,& 2,& 2 & 1,& 1+t,& \ldots,& 2+(d-3)t,& 3+(d-2)t\\
1,& 1,& \ldots,& 2,& 3 & 1,& 1+t,& \ldots,& 2+(d-3)t,& 4+(d-2)t\\
& & \vdots & & & & & \vdots & &\\
1,& 2,& \ldots,& 1,& 1 & 1,& 2+t,& \ldots,& 2+(d-3)t,& 2+(d-2)t\\
1,& 2,& \ldots,& 1,& 2 & 1,& 2+t,& \ldots,& 2+(d-3)t,& 3+(d-2)t\\
1,& 2,& \ldots,& 1,& 3 & 1,& 2+t,& \ldots,& 2+(d-3)t,& 4+(d-2)t\\
& & \vdots & & & & & \vdots & &\\
1,& 2,& \ldots,& 2,& 1 & 1,& 2+t,& \ldots,& 3+(d-3)t,& 3+(d-2)t\\
1,& 2,& \ldots,& 2,& 2 & 1,& 2+t,& \ldots,& 3+(d-3)t,& 4+(d-2)t\\
1,& 2,& \ldots,& 2,& 3 & 1,& 2+t,& \ldots,& 3+(d-3)t,& 5+(d-2)t\\
& & \vdots & & & & & \vdots & &\\
1,& 3,& \ldots,& 1,& 1 & 1,& 3+t,& \ldots,& 3+(d-3)t,& 3+(d-2)t\\
1,& 3,& \ldots,& 1,& 2 & 1,& 3+t,& \ldots,& 3+(d-3)t,& 4+(d-2)t\\
1,& 3,& \ldots,& 1,& 3 & 1,& 3+t,& \ldots,& 3+(d-3)t,& 5+(d-2)t\\
& & \vdots & & & & & \vdots & &\\
3,& 2,& \ldots,& 5,& 1 & 3,& 4+t,& \ldots,& 8+(d-3)t,& 8+(d-2)t\\
3,& 2,& \ldots,& 5,& 2 & 3,& 4+t,& \ldots,& 8+(d-3)t,& 9+(d-2)t\\
3,& 2,& \ldots,& 5,& 3 & 3,& 4+t,& \ldots,& 8+(d-3)t,& 10+(d-2)t\\
& & \vdots & & & & & \vdots & &\\
\hline
\end{array}
\]
\caption{Some correspondences between $(s_1,\ldots,s_{d-1})$ and $(j_1,\ldots,j_{d-1})$.}
\label{fig:corrind}
\end{table}

In general, we have the following correspondence:
\begin{equation*}
\begin{array}{cccccc}
(s_1,& s_2,& \ldots,& s_{k},& \ldots,& s_{d-1})\\
& & & \updownarrow & &\\
(j_1,& j_2,& \ldots,& j_{k},& \ldots,& j_{d-1})\\
& & & \shortparallel & &\\
\left(s_{[1]}, \right.& s_{[2]}+t-1,& \ldots,& s_{[k]}+(k-1)(t-1),& \ldots,& \left. s_{[d-1]}+(d-2)(t-1)\right)
\end{array}
\end{equation*}
\end{remark}

The considerations included in Remark~\ref{rem:decomp} allow to state the Theorem~\ref{thm:countlex}.
The result could be obtained using the techniques used in the proof of \cite[Theorem~3.10]{ACF2}, but we choose a different and more easily generalizable way.

\begin{theorem}\label{thm:countlex}
Let $n,d,t$ be positive integers such that $1+(d-1)t\le n$. Let $u=x_{i_1}x_{i_2}\cdots x_{i_d}\in M_{n,d,t}$ be a $t$-spread monomial of $S$. The cardinality of $L_t\{u\}$ can be presented as a sum of suitable binomial coefficients.
\end{theorem}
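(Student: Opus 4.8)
The plan is to count $L_t\{u\} = \{w \in M_{n,d,t} : w \geq_\slex u\}$ by stratifying the monomials $w$ according to how their support compares to that of $u = x_{i_1}\cdots x_{i_d}$, and then to express each stratum as one of the binomial coefficients appearing in the nested decompositions of Remark~\ref{rem:decomp}. Concretely, a monomial $w = x_{j_1}\cdots x_{j_d} \in M_{n,d,t}$ satisfies $w >_\slex u$ if and only if there is some position $k \in [d]$ with $j_1 = i_1, \ldots, j_{k-1} = i_{k-1}$ and $j_k < i_k$. For each such $k$, and for each admissible value $c$ with $i_{k-1} + t \le c < i_k$ (taking $i_0 + t := 1$), the monomials $w$ with prescribed initial segment $x_{i_1}\cdots x_{i_{k-1}}x_c$ and arbitrary $t$-spread completion $x_{j_{k+1}}\cdots x_{j_d}$ with $j_{k+1} \ge c + t$ number exactly $\binom{\,n - c - (d-k)(t-1)\,}{\,d-k\,}$, by the cardinality formula \eqref{eq:cardtsp} applied to the ring $K[x_{c+t}, \ldots, x_n]$ in degree $d-k$. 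Summing over $c$ and then over $k$, and finally adding $1$ for $u$ itself, gives $|L_t\{u\}|$ as a finite sum of binomial coefficients; this already proves the statement.

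The first step I would carry out is to verify the equivalence described above — that $w \ge_\slex u$ decomposes as a disjoint union over the "first place of disagreement" $k$, plus the single monomial $u$. This is immediate from the definition of $>_\slex$. The second step is the counting of each stratum: fixing $x_{i_1}\cdots x_{i_{k-1}}x_c$ with $c < i_k$ and $c \ge i_{k-1} + t$ (so that the partial monomial is $t$-spread), the remaining indices $j_{k+1} < \cdots < j_d$ must satisfy $j_{k+1} \ge c + t$ and the $t$-spread gap condition among themselves, i.e. they form an arbitrary element of $M_{n - c - t + 1,\, d - k,\, t}$ after a shift; applying \eqref{eq:cardtsp} yields the binomial coefficient $\binom{(n-c-t+1) - (d-k-1)(t-1)}{d-k} = \binom{n - c - (d-k)(t-1)}{d-k}$. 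The third step is bookkeeping: collecting the ranges of $c$ for each $k$ and writing the total as
\[
|L_t\{u\}| \;=\; 1 + \sum_{k=1}^{d} \ \sum_{c = i_{k-1}+t}^{i_k - 1} \binom{n - c - (d-k)(t-1)}{d-k},
\]
with the convention $i_0 + t = 1$, and observing via Lemma~\ref{lem:decomp} that this matches the telescoping binomial decompositions of Remark~\ref{rem:decomp} (so it dovetails with the algorithmic viewpoint of Remark~\ref{rem:tlexseg}).

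The main obstacle I anticipate is purely notational rather than conceptual: pinning down the exact lower limit of the inner sum (the "carry" from the $t$-spread constraint forces $c \ge i_{k-1} + t$, not merely $c \ge i_{k-1}+1$) and making sure the degenerate cases are handled — namely $k = d$ (where the completion is empty and each term is $\binom{\text{something}}{0} = 1$), the case $i_k = i_{k-1}+t$ (empty inner sum, no monomial branches off at position $k$), and the top case where $u = \max M_{n,d,t}$ forces every inner sum to be empty so that $|L_t\{u\}| = 1$. I would also want to cross-check the formula against the $t$-spread Veronese case $u = \min M_{n,d,t}$, where it must collapse to $\binom{n-(d-1)(t-1)}{d}$ via exactly the nested decomposition \eqref{eq:Vdecomp1}; this sanity check is the natural way to confirm that the index arithmetic is correct.
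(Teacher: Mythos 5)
Your proposal is correct, and it reaches the same final sum of binomial coefficients as the paper, but by a more direct route. You partition $\{w: w>_{\slex}u\}$ by the first position $k$ at which $w$ disagrees with $u$ and by the value $c=j_k<i_k$ there, and you count each stratum in one stroke by applying the Veronese cardinality formula (\ref{eq:cardtsp}) to the shifted variable range $x_{c+t},\dots,x_n$, which gives $\binom{n-c-(d-k)(t-1)}{d-k}$; summing and adding $1$ for $u$ yields a closed formula at once. The paper uses the same underlying stratification, but it never invokes (\ref{eq:cardtsp}) for the strata directly: instead it starts from $|M_{n,d,t}|$ and repeatedly applies the elementary decomposition of Lemma~\ref{lem:decomp}, interpreting the resulting coefficients via Remark~\ref{rem:decomp} and producing a chain of refined upper bounds $c_1\ge c_2\ge\cdots\ge c_d=|L_t\{u\}|$. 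After the change of variable $s_k=c-i_{k-1}-t+1$ your terms coincide with the paper's (indeed $n-(d-k+1)(t-1)-i_{k-1}-s_k=n-c-(d-k)(t-1)$), and your count of $i_d-(d-1)t$ binomial coefficients agrees as well. What your argument buys is brevity and a self-contained closed formula; what the paper's iterated-decomposition scheme buys is the machinery (the decomposition tree and the successive bounds $c_k$) that is mirrored exactly in Algorithm~\ref{alg:countlex} and then reused, in parallelized form, for the harder strongly stable count of Theorem~\ref{thm:countstrong}, where a single application of (\ref{eq:cardtsp}) per stratum no longer suffices. The degenerate cases you flag (empty inner sums, $k=d$, $u=\max M_{n,d,t}$) are handled correctly by your conventions, and one can also check that no binomial with upper entry smaller than $d-k$ ever occurs, since $i_k\le n-(d-k)t$ forces $n-c-(d-k)(t-1)\ge d-k+1$ for all admissible $c$.
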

\begin{proof}
Our goal is to compute $|L_t\{u\}|$, \emph{i.e.}, the number of all monomials $w\in M_{n,d,t}$ such that $w\ge_{\slex}u$. Let $c=|M_{n,d,t}|$, we have $|L_t\{u\}|\leq c$. So, by Remark~\ref{rem:decomp}, we need to start the process with the following binomial decomposition:
\begin{equation}\label{eq:countlex}
\begin{aligned}
& c=\binom{n-(d-1)(t-1)}{d}=\sum_{s=1}^{n-(d-1)t}{\binom{n-(d-1)(t-1)-s}{d-1}}.\\
\end{aligned}
\end{equation}

As clarified in the remark, the $s$-th binomial coefficient, $\binom{n-(d-1)(t-1)-s}{d-1}$, counts the number of $t$-spread monomials $w$ of degree $d$ with $\min(w)=s$. This tool will allows us to count the desired monomials. 

Let $w\in M_{n,d,t}$ such that $w=x_{j_1}x_{j_2}\cdots x_{j_d}$ and $w\geq_{\slex}u$. We observe that the monomials such that $j_1< i_1$ are greater than $u$ with respect to $>_\slex$, and they are counted by the sum of the first $i_1-1$ binomial coefficients in (\ref{eq:countlex}). Furthermore, if $j_1=i_1$, then we have to analyze successive indexes to verify if the monomial is greater than $u$ or not. This means having to carry out new binomial decompositions. Hence, if we consider the first $i_1$ binomial coefficients in (\ref{eq:countlex}), then we have improved the upper bound for the cardinality we want to compute. Thus, $|L_t\{u\}|\leq c_1$, where
\begin{equation}\label{eq:countlex1}
\begin{aligned}
c_1= & \sum_{s_1=1}^{i_1}{\binom{n-(d-1)(t-1)-s_1}{d-1}}=\\
 = & \sum_{s_1=1}^{i_1-1}{\binom{n-(d-1)(t-1)-s_1}{d-1}}+\binom{n-(d-1)(t-1)-i_1}{d-1}.
\end{aligned}
\end{equation}

As already observed, the first $i_1-1$ binomial coefficients in (\ref{eq:countlex1}) must be entirely added to compute the sought cardinality. Instead, the $i_1$-th binomial coefficient must be decomposed to be investigated using the other indexes of $u$. So, we have:
\begin{equation}\label{eq:countlex1_a}
\begin{aligned}
& \binom{n-(d-1)(t-1)-i_1}{d-1} = \sum_{s_2=1}^{n-(d-1)t-i_1+1}{\binom{n-(d-1)(t-1)-i_1-s_2}{d-2}}\\
& = \binom{n-(d-1)(t-1)-i_1-1}{d-2}+
\cdots+\binom{d-2}{d-2}.
\end{aligned}
\end{equation}

We notice that the $s_2$-th binomial coefficient of the $i_1$-th decomposition (\ref{eq:countlex1_a}) represents the monomials with $j_1=i_1$ and $j_2=i_1+s_2+t-1$ (see Remark~\ref{rem:decomp}). Now, we have to select the  binomial coefficients needed for computing $|L_t\{u\}|$.

To solve our problem, it is necessary to find a better bound for the value of $j_2$. With a similar consideration done for $j_1$, all the monomials with $j_1=i_1$ and $j_2<i_2$ are greater than $u$. So, we must count entirely the first $i_2-i_1-t$ binomial coefficients in (\ref{eq:countlex1_a}). Indeed, $j_2=i_1+s_2+t-1<i_2$ implies $s_2<i_2-i_1-t+1$.
When $j_1=i_1$ and $j_2=i_2$, then we need further investigations. Hence, at this step, we have improved the bound for the cardinality: $|B_t\{u\}|\leq c_2$, with
\begin{equation*}
\begin{aligned}
c_2  = & \sum_{s_1=1}^{i_1-1}{\binom{n-(d-1)(t-1)-s_1}{d-1}} + \sum_{s_2=1}^{i_2-i_1-t}{\binom{n-(d-1)(t-1)-i_1-s_2}{d-2}}\\
  & + \binom{n-(d-2)(t-1)-i_2}{d-2}.
\end{aligned}
\end{equation*}

Now, we have to decompose the last binomial coefficient:
\begin{equation*}
\begin{aligned}
 & \binom{n-(d-2)(t-1)-i_2}{d-2}= \sum_{s_3=1}^{n-(d-2)t-i_2+1}{\binom{n-(d-2)(t-1)-i_2-s_3}{d-3}}.
\end{aligned}
\end{equation*}

As noted in Remark~\ref{rem:decomp}, these binomials count the monomials such that $j_1=i_1$, $j_2=i_2$ and $j_3$ between $i_2+t$ and $i_3$, that is, $i_3-i_2-t+1$ binomials. Hence, we have $|B_t\{u\}|\leq c_3$, with  
\begin{equation*}
\begin{aligned}
c_3  &=  \sum_{s_1=1}^{i_1-1}{\binom{n-(d-1)(t-1)-s_1}{d-1}} + \sum_{s_2=1}^{i_2-i_1-t}{\binom{n-(d-1)(t-1)-i_1-s_2}{d-2}}\\
  & + \sum_{s_3=1}^{i_3-i_2-t}{\binom{n-(d-2)(t-1)-i_2-s_3}{d-3}}+ \binom{n-(d-3)(t-1)-i_3}{d-3},
\end{aligned}
\end{equation*}
and so on, By iterating this procedure $d$ times we obtain the value of $|B_t\{u\}|$.

In general, as observed in Remark~\ref{rem:decomp}, we obtain the following bound:
\begin{equation*}\label{eq:countlexk}
\begin{aligned}
c_k  &=  \sum_{s_1=1}^{i_1-1}{\binom{n-(d-1)(t-1)-s_1}{d-1}} + \sum_{s_2=1}^{i_2-i_1-t}{\binom{n-(d-1)(t-1)-i_1-s_2}{d-2}}\\
 & + \cdots + \sum_{s_k=1}^{i_k-i_{k-1}-t}{\binom{n-(d-k+1)(t-1)-i_{k-1}-s_k}{d-k}}\\
 &+ \binom{n-(d-k)(t-1)-i_k}{d-k},
\end{aligned}
\end{equation*}
for $k\in [d]$.

So, for the value $k=d$ we obtain the desired cardinality, \emph{i.e.}, $c_{d}=|L_t\{u\}|$,
\begin{equation*}\label{eq:countlexd}
\begin{aligned}
c_d  &=  \sum_{s_1=1}^{i_1-1}{\binom{n-(d-1)(t-1)-s_1}{d-1}} + \sum_{s_2=1}^{i_2-i_1-t}{\binom{n-(d-1)(t-1)-i_1-s_2}{d-2}}\\
 & + \cdots + \sum_{s_d=1}^{i_d-i_{d-1}-t}{\binom{n-(t-1)-i_{d-1}-s_d}{0}}+ \binom{n-i_d}{0},
\end{aligned}
\end{equation*}
that counts the $t$-spread monomials $w$ of $M_{n,d,t}$ greater than or equal to $u$ with respect to $>_\slex$.

The number of the binomial coefficients involved in (\ref{eq:countlexd}) is $i_d-(d-1)t$,
\[
(i_1-1) + (i_2-i_1-t) + \cdots + (i_d-i_{d-1}-t) +1 = i_1 + \sum_{p=1}^{d-1}{(i_{p+1}-i_p-t)} = i_d-(d-1)t.
\]
\end{proof}

The next example illustrates the Theorem~\ref{thm:countlex}, that is, the counting method for $t$-lex sets. 

\begin{example}\label{ex:countlex}
Let $S=K[x_1,\ldots,x_{11}]$, $t=3$ and $u=x_2x_6x_{10}\in M_{11,3,3}$. We want to compute $c_3=|L_t\{u\}|$.

As done in Remark~\ref{rem:decomp}, $c=|M_{11,3,3}|=\binom{7}{3}=35$. Hence, we start considering the following binomial decomposition (Lemma~\ref{lem:decomp}):
\begin{equation}\label{eq:excountlex1}
\binom{7}{3}={\bf \binom{6}{2}}+\underline{\binom{5}{2}}+\binom{4}{2}+\binom{3}{2}+\binom{2}{2}.
\end{equation}

Since $i_1=2$, then all monomials $w\in M_{11,3,3}$ with $\min(w)\le i_1-1=1$ are greater than $u$. Hence, for the computation of $c_3=|L_t\{u\}|$ we must take into account the sum of the first binomial coefficient in (\ref{eq:excountlex1}), \emph{i.e.}, $c_1=\binom{6}{2}=15$. From here on out, we highlight in bold the binomial coefficients to be added and we underline and those ones to be decomposed.
		
Now, we consider the following binomial decomposition:
\[
\binom{5}{2}={\bf \binom{4}{1}}+\underline{\binom{3}{1}}+\binom{2}{1}+\binom{1}{1}.
\]		
Since $i_2-i_1-t=1$,  the number of all monomials with $j_1=i_1$ and $j_2 < i_2$ is $\binom{4}{1}=4$. Hence, adding the binomials found up to this point we have got $c_2=15+4=19$ monomials.
The next binomial decomposition we must consider is:
\[
\binom{3}{1}={\bf \binom{2}{0}+\binom{1}{0}}+\binom{0}{0}.
\]	
Since $i_3-i_2-t=1$, we must take into account $\binom{2}{0}=1$ monomial with $j_1=i_1$, $j_2=i_2$ and $j_3 < i_3$. 
So, we  obtain $19+1=20$ monomials of $M_{11,3,3}$ greater than $u$, and, adding the last binomial coefficient related to $u$, we have $c_3=|L_3\{x_2x_6x_{10}\}|=20+1=21$.
		
The following scheme summarizes the process of counting the monomials of $L_3\{x_2x_6x_{10}\}$:
\begin{align*}
\tbinom{7}{3}={\bf \tbinom{6}{2}}+ & \underline{\tbinom{5}{2}}+\tbinom{4}{2} + \tbinom{3}{2}+\tbinom{2}{2} \phantom{{\bf+\tbinom{1}{0}}+\tbinom{0}{0}\ \ \, }\rightarrow \boxed{15}\\
& \tbinom{5}{2}=
\begin{aligned}[t] {\bf \tbinom{4}{1}}+\underline{\tbinom{3}{1}}&+\tbinom{2}{1}+\tbinom{1}{1} \phantom{+\tbinom{0}{0}\ \ } \rightarrow \boxed{\phantom{1}4}\\
\tbinom{3}{1}&={\bf \tbinom{2}{0}+\tbinom{1}{0}}+\tbinom{0}{0} \rightarrow \boxed{\phantom{1}2}
\end{aligned}
\end{align*}
The number of binomial coefficients involved in the counting is $i_d-(d-1)t=10-6=4$, and 
all the monomials of $L_3\{x_2x_6x_{10}\}$ are:
\[
\begin{aligned}
\begin{array}{r}
x_1x_4x_7, x_1x_4x_8, x_1x_4x_9, x_1x_4x_{10}, x_1x_4x_{11},\\
x_1x_5x_8, x_1x_5x_9, x_1x_5x_{10}, x_1x_5x_{11},\\
x_1x_6x_9, x_1x_6x_{10}, x_1x_6x_{11},\\
x_1x_7x_{10}, x_1x_7x_{11},\\
x_1x_8x_{11},\\
\end{array} & \rightarrow \boxed{15}\\
\begin{array}{r}
x_2x_5x_8, x_2x_5x_9, x_2x_5x_{10}, x_2x_5x_{11},\\
\end{array} & \rightarrow \boxed{\phantom{1}4}\\
\begin{array}{r}
x_2x_6x_9,\\
\end{array} & \rightarrow \boxed{\phantom{1}1}\\
\begin{array}{r}
x_2x_6x_{10}.\\
\end{array} & \rightarrow \boxed{\phantom{1}1}\\
\end{aligned}
\]
\end{example}

The Algorithm~\ref{alg:countlex} shows how to compute the cardinality of the initial $t$-lex segment generated by a monomial. The validity of the procedure is granted by the Theorem~\ref{thm:countlex}.
	
\begin{algorithm}
\scriptsize
\caption{Computation of the cardinality $L_t\{u\} \subset M_{n,d,t}$}
\label{alg:countlex}
\KwIn{Polynomial ring $S$, monomial $u$, positive integer $t$}
\KwOut{positive integer $c$}
\SetKw{Error}{error}
\Begin{
   \eIf{\texttt{isTSpread($u$,$t$)}}{
      $n \gets$ number of indeterminates of $S$\;      
      $d \gets \deg(u)$\;
      $decomp \gets \{\}$\;
      \For{$i \gets 1$ \KwTo $n-(d-1)*t$}{
         $decomp \gets decomp \cup \big\{\{n-(d-1)*(t-1)-i, d-1\}\big\}$\;      
      }     
      $c \gets 0$\;
      $sub \gets 0$\;
      \For{$q \gets 0$ \KwTo $d-1$}{
         $s \gets 0$\;
         \If{$q>0$}{
            $sub \gets i_{q-1}+t$\;
         }        
         \While{$s < i_q-sub$}{
            $c \gets c+$ binomial of $decomp(s)$\;
            $s \gets s+1$\;
         }
         $tmp \gets \{\}$\;
         \For{$i \gets 0$ \KwTo $decomp(s)(1)-decomp(s)(2)$}{
            $tmp \gets tmp\cup \big\{\{decomp(s)(1)-i-1, decomp(s)(2)-1\}\big\}$\;      
         }
         $decomp \gets tmp$\;
      }   
   }{
      \Error{expected a $t$-spread monomial}\;      
   }    
\Return $c+1$\;
}
\end{algorithm}

Now, we pass to analyze the $t$-strongly stable set of monomials generated by a monomial. From Remark~\ref{rem:tstrongseg}, we have $B_t\{u\}=B_t[x_1x_{1+t}x_{1+2t}\cdots x_{1+(d-1)t},u]$. To compute $|B_t\{u\}|$ we have to count all the monomials of $M_{n,d,t}$ built by the Algorithm~\ref{alg:tnextstrong}. Moreover, Proposition~\ref{prop:tnextstrong} gives some tools to identify the desired monomials by conditions on their supports.

The following result shows an algorithmic method to find the cardinality of the set $B_t\{u\}\subset M_{n,d,t}$. The problem is more complicated than the one solved in Theorem~\ref{thm:countlex}. Nevertheless, we will use the same approach.

\begin{theorem}\label{thm:countstrong}
Let $n,d,t$ be positive integers such that $1+(d-1)t\le n$. Let $u=x_{i_1}x_{i_2}\cdots x_{i_d}\in M_{n,d,t}$ be a $t$-spread monomial of $S$. The cardinality of $B_t\{u\}$ is the sum of suitable binomial coefficients.
\end{theorem}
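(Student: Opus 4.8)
The strategy is to mimic the binomial-decomposition argument of Theorem~\ref{thm:countlex}, but now counting only the monomials $w\in M_{n,d,t}$ with $w\geq_{\Borel}u$, as dictated by Characterization~\ref{char:strongly} (and confirmed by the construction in Proposition~\ref{prop:tnextstrong} and Remark~\ref{rem:tstrongseg}). As before, I start from $c=|M_{n,d,t}|=\binom{n-(d-1)(t-1)}{d}$ and apply the decomposition in Remark~\ref{rem:decomp}, where the $s$-th term counts the $t$-spread monomials $w=x_{j_1}\cdots x_{j_d}$ with $\min(w)=j_1=s$. The key difference: for the $t$-lex set the condition $w\geq_{\slex}u$ imposes, once $j_1=i_1,\dots,j_{k-1}=i_{k-1}$ are fixed, only the one-sided bound $j_k\le i_k$ on the next index; for the $t$-strongly stable set the condition $w\geq_{\Borel}u$ imposes the \emph{same} bound $j_k\le i_k$ \emph{for every} $k$ simultaneously. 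So I would select, in the first binomial decomposition, the first $i_1$ terms (those with $j_1\le i_1$), add the first $i_1-1$ of them entirely, and decompose the $i_1$-th one. The crucial new point — which I would highlight as the heart of the argument — is the bookkeeping in the \emph{second} decomposition: after fixing $j_1=i_1$, I must keep only those sub-terms with $j_2\le i_2$, i.e.\ add the terms for $j_2=i_1+t,\dots,i_2-1$ entirely and decompose the term $j_2=i_2$; but now, because $j_2$ is pinned to $i_2$ rather than to $i_1+s_2+t-1$ freely, the same $\Borel$-type cap must be re-applied at every subsequent level $j_3\le i_3$, $j_4\le i_4$, and so on.

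\textbf{Main steps.} (1) Write $c=\sum_{s_1=1}^{n-(d-1)t}\binom{n-(d-1)(t-1)-s_1}{d-1}$ and, since $w\geq_{\Borel}u$ forces $j_1\le i_1$, discard the terms with $s_1>i_1$; add the terms $s_1=1,\dots,i_1-1$ and retain $\binom{n-(d-1)(t-1)-i_1}{d-1}$ for further decomposition. (2) Decompose that retained term using Lemma~\ref{lem:decomp} into terms indexed by $j_2$ ranging over the legal values $\ge i_1+t$; by $j_2\le i_2$ add entirely the terms with $j_2=i_1+t,\dots,i_2-1$ (there are $i_2-i_1-t$ of them) and retain the $j_2=i_2$ term. (3) Iterate: at the $k$-th stage one has a single retained binomial coefficient corresponding to the prefix $j_1=i_1,\dots,j_{k-1}=i_{k-1}$, decompose it into terms indexed by $j_k\ge i_{k-1}+t$, add the $i_k-i_{k-1}-t$ terms with $j_k<i_k$ and retain the $j_k=i_k$ term. (4) After $d$ steps the last retained term is $\binom{n-i_d}{0}=1$, the monomial $u$ itself. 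Summing over all stages gives
\[
|B_t\{u\}| \;=\; \sum_{k=1}^{d}\ \Big(\text{sum of the }i_k-i_{k-1}-t\text{ (resp.\ }i_1-1\text{ for }k=1\text{) added binomials at stage }k\Big)\;+\;1,
\]
each of which is a sum of binomial coefficients whose parameters I would write out explicitly, tracking the ``shift'' $+(k-1)(t-1)$ in the upper index exactly as in the correspondence table of Remark~\ref{rem:decomp}. One then checks that at each decomposition step the parameters remain valid (upper index $\ge$ lower index), which follows from $u\in M_{n,d,t}$, i.e.\ $i_k\ge 1+(k-1)t$ and $i_d\le n$.

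\textbf{The main obstacle.} The difficulty is \emph{not} the one-level cut $j_k\le i_k$ — that is identical to the $t$-lex case — but the fact that in the $t$-strongly stable count the cap at level $k$ is \emph{not} inherited automatically from the previous cut, so one must argue that truncating at $j_k\le i_k$ at \emph{every} stage is both necessary and sufficient to enumerate exactly $\{w: w\ge_{\Borel}u\}$. Concretely, I must verify that after adding the terms with $j_k<i_k$ (for a fixed legal prefix $j_1=i_1,\dots,j_{k-1}=i_{k-1}$), \emph{all} of the $t$-spread completions of that partial support are $\ge_{\Borel}u$ — which is true precisely because $j_p=i_p$ for $p<k$, $j_k<i_k$, and any $t$-spread completion has $j_p\ge j_k+(p-k)t < i_k+(p-k)t \le i_p$ for $p>k$, using $i_k+(p-k)t\le i_p$. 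This monotonicity estimate, together with Characterization~\ref{char:strongly}, is what legitimizes adding each block of binomials wholesale and decomposing only the single ``boundary'' coefficient; it is the place where the proof genuinely departs from Theorem~\ref{thm:countlex} and where care is needed. I would also remark that the count of binomial coefficients involved equals $i_1-1+\sum_{p=1}^{d-1}(i_{p+1}-i_p-t)+1 = i_d-(d-1)t$, exactly as in the $t$-lex case, since the telescoping is unchanged.
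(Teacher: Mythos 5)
Your plan breaks down at the very first step, where you import the single-branch truncation of Theorem~\ref{thm:countlex}: for $B_t\{u\}$ you may not add wholesale the blocks with $j_1<i_1$ (nor, later, the blocks with $j_1=i_1$, $j_2<i_2$, and so on), because a monomial in such a block still has to satisfy $j_p\le i_p$ at \emph{all} later positions. The ``monotonicity estimate'' you offer as justification, namely $j_p\ge j_k+(p-k)t<i_k+(p-k)t\le i_p$ for $p>k$, only bounds the \emph{smallest possible} value of the later indices; it shows that the minimal completion is $\ge_{\Borel}u$ (essentially the content of Proposition~\ref{prop:tnextstrong}), not that every completion is. Concretely, take $u=x_2x_5x_8x_{11}$ and $t=2$ as in Example~\ref{ex:countstrong2}: the monomial $w=x_1x_3x_9x_{11}$ lies in the block $j_1=1<i_1$ that you add entirely, yet $j_3=9>8=i_3$, so $w\notin B_2\{u\}$ (indeed it does not appear in the list of the $42$ monomials there). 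What your procedure enumerates is exactly $\{w:w\ge_{\slex}u\}=L_t\{u\}$, i.e.\ you have reproduced the $t$-lex count; this is also visible in your final tally of $i_d-(d-1)t$ binomial coefficients, whereas the correct number is $\C_{d-1}\big(i_{d-1}-(d-2)t,\ldots,i_2-t,i_1\big)$ (Remark~\ref{rem:operator}), e.g.\ $14$ rather than $5$ in Example~\ref{ex:countstrong2}.

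The missing idea is that the cap $j_k\le i_k$ must be enforced inside \emph{every} retained branch, so the binomial decompositions have to be carried out in parallel rather than along a single boundary branch: each of the $i_1$ coefficients selected at the first level (not only the $i_1$-th) is decomposed again, and in the branch with $j_1=s_1$ the second-level cap $j_2\le i_2$ allows $i_2-s_1-t+1$ terms, all of which are again decomposed, the cap at level $k$ depending on the accumulated prefix $s_{[k-1]}$. After replacing $n$ by $\overline n=\max(u)$ (so that the last index needs no truncation), this yields the nested sum
\[
|B_t\{u\}|=\sum_{s_1=1}^{i_1}\ \sum_{s_2=1}^{i_2-s_1-t+1}\cdots\sum_{s_{d-1}=1}^{i_{d-1}-s_{[d-2]}-(d-2)(t-1)}\binom{\overline n-(d-1)(t-1)-s_{[d-1]}}{1},
\]
which is what the paper actually proves; your telescoping identity for the number of coefficients, and the final ``$+1$ for $u$ itself,'' have no analogue here.
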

\begin{proof}
The inclusion $B_t\{u\}\subset M_{n,d,t}$ can be improved by noting that for each monomial $w$ of $B_t\{u\}$ we have $\max(w)\leq \max(u)$. Hence, the $t$-strongly stable set remains unchanged if we consider $B_t\{u\}\subset M_{\overline n,d,t}$, where $\overline n=\max(u)$. So, we only have to count the monomials of $M_{\overline n,d,t}$ that satisfy the conditions in Proposition~\ref{prop:tnextstrong}. If $c=|M_{\overline n,d,t}|$, then we can write $|B_t\{u\}|\leq c$.

Let $w=x_{j_1}x_{j_2}\cdots x_{j_d}$ be a monomial of $B_t\{u\}$. In such a case, each index in $\supp(w)$ is bounded by the corresponding index in $\supp(u)$, \emph{i.e.}, $j_s\leq i_s$ for $s\in[d]$ (see Characterization~\ref{char:strongly}). To count all the monomials of $B_t\{u\}$, we can properly exploit some binomial decompositions iteratively.    

Let us start by considering the binomial decomposition of $c=|M_{\overline n,d,t}|$
induced by the Lemma~\ref{lem:decomp}
\begin{equation}\label{eq:countstrong1}
\begin{aligned}
c= & \binom{\overline n-(d-1)(t-1)}{d}= \sum_{s=1}^{\overline n-(d-1)t}{\binom{\overline n-(d-1)(t-1)-s}{d-1}}.\\
\end{aligned}
\end{equation}
The meaning of such a decomposition has been analyzed in the Remark~\ref{rem:decomp}. We recall that the pivotal idea of the remark is to associate the index $j_1$ to a binomial coefficient of the decomposition (\ref{eq:countstrong1}), based on the value it  assumes.

In particular, we have to pay attention only to the first $i_1$ binomial coefficients of (\ref{eq:countstrong1}). Indeed, from $j_1\leq i_1$, we have that the first indeterminate of $w$ can assume all values between $1$ and $j_1$, \emph{i.e.}, $j_1\in[i_1]$. So, we will restrict our investigation to the following coefficients:
\begin{equation}\label{eq:countstrong1_a}
\begin{aligned}
c_1= & \sum_{s_1=1}^{i_1}{\binom{\overline n-(d-1)(t-1)-s_1}{d-1}}\\
 = & \binom{\overline n-(d-1)(t-1)-1}{d-1}+\cdots+\binom{\overline n-(d-1)(t-1)-i_1}{d-1}.
\end{aligned}
\end{equation}
More in detail, the $s_1$-th contribution represents the number of the monomials with $j_1=s_1$.

Now, we have to observe that the binomial coefficients in (\ref{eq:countstrong1_a}) must not be fully added to compute $|B_t\{u\}|$. Indeed, for each of them we will consider further decompositions that will be related to the value of $j_2$. Hence, this summation provides a bound for the target cardinality: $|B_t\{u\}|\leq c_1$. So, we need to improve this bound until reaching the exact value.

As far as the investigation of the second index of $w$, $j_2$, is concerned, we must consider further binomial decompositions for each of the addends in (\ref{eq:countstrong1_a}) (see Remark~\ref{rem:decomp}). We can observe that, unlike the $t$-lex case analyzed in the Theorem~\ref{thm:countlex}, we need to decompose in parallel several binomial coefficients continuing recursively until a final condition is reached: 
\begin{align}
& \binom{\overline n-(d-1)(t-1)-1}{d-1} =  \sum_{s_2=1}^{\overline n-(d-1)t}{\binom{\overline n-(d-1)(t-1)-1-s_2}{d-2}}=\nonumber\\
& = \binom{\overline n-(d-1)(t-1)-2}{d-2} + 
\cdots+\binom{d-2}{d-2};\label{eq:countstrong2i}\\
& \cdots\nonumber\\
& \binom{\overline n-(d-1)(t-1)-s_1}{d-1} = \sum_{s_2=1}^{\overline n-(d-1)t-s_1+1}{\binom{\overline n-(d-1)(t-1)-s_1-s_2}{d-2}}=\nonumber\\
& = \binom{\overline n-(d-1)(t-1)-s_1-1}{d-2}+
\cdots+\binom{d-2}{d-2};\label{eq:countstrong2ii}\\
& \cdots\nonumber\\
& \binom{\overline n-(d-1)(t-1)-i_1}{d-1} = \sum_{s_2=1}^{\overline n-(d-1)t-i_1+1}{\binom{\overline n-(d-1)(t-1)-i_1-s_2}{d-2}}=\nonumber\\
& = \binom{\overline n-(d-1)(t-1)-i_1-1}{d-2}+
\cdots+\binom{d-2}{d-2}\label{eq:countstrong2iii}.
\end{align}

To be more clear, we recall that the $s_2$-th binomial coefficient of the $s_1$-th decomposition represents the number of the monomials with $j_1=s_1$ and $j_2=s_{[2]}+t-1$. Now, let us move on to discuss about the number of binomial coefficients, related to the index $s_2$, in order to compute $|B_t\{u\}|$.

First, we consider the decomposition of the binomial coefficients in (\ref{eq:countstrong2i}). They represent all the monomials whose index $j_1=1$, and the addenda of this sum are related to the index $j_2$. This index can assume the values between $1+t$ and $i_2$, that is, we only have to consider the first $i_2-(1+t)+1$ binomial coefficients.
Analogously, from the decomposition of the binomial for counting monomials with $j_1=2$, we can state that $j_2$ can assume the values between $2+t$ and $i_2$, that is, $i_2-(1+t)$ values.

In general, as we can see in (\ref{eq:countstrong2ii}), from the decomposition of the $s_1$-th component, $j_2$ assumes values between $s_1+t$ and $i_2$, that is, $i_2-(s_1+t)+1$ values.

With this in mind, we can improve the bound for the cardinality: $|B_t\{u\}|\leq c_2$, with
\begin{equation*}
\begin{aligned}
c_2 & =  \sum_{s_1=1}^{i_1}{ \sum_{s_2=1}^{i_2-(s_1+t)+1}{\binom{\overline n-(d-1)(t-1)-s_{[2]}}{d-2}}}=\\
& = \sum_{s_2=1}^{i_2-(1+t)+1}{\binom{\overline n-(d-1)(t-1)-1-s_2}{d-2}}+ \cdots \\
&+\sum_{s_2=1}^{i_2-(i_1+t)+1}{\binom{\overline n-(d-1)(t-1)-i_1-s_2}{d-2}},
\end{aligned}
\end{equation*}
that can be written as
\begin{equation}\label{eq:countstrong2_a}
\begin{aligned}
c_2 &= \binom{\overline n-(d-1)(t-1)-2}{d-2} + \cdots + \binom{\overline n-(d-2)(t-1)-i_2}{d-2}+\cdots\\
&+ \binom{\overline n-(d-1)(t-1)-i_1-1}{d-2}+\cdots+\binom{\overline n-(d-2)(t-1)-i_2}{d-2}.
\end{aligned}
\end{equation}

Now, we must consider further decompositions. So, from the first binomial coefficient of the first row of (\ref{eq:countstrong2_a}), for $j_1=1$ and $j_2=1+t$, the index $j_3$ of $w$ can assume the values from $1+2t$ to $i_3$, that is, we have $i_3-2t$ binomial coefficients. From the last coefficient of the first row of (\ref{eq:countstrong2_a}), for $j_1=1$ and $j_2=i_2$, the index $j_3$ can take the values from $i_2+t$ and $i_3$, so, $i_3-i_2-t+1$ binomial coefficients. From the first binomial of the second row, for $j_1=i_1$ and $j_2=i_1+t$, the index $j_3$ can assume the values from $i_1+2t$ and $i_3$, then $i_3-i_1-2t+1$ coefficients. Finally, from the last binomial of the second row, for $j_1=i_1$ and $j_2=i_2$, $j_3$ can take the values from $i_2+t$ and $i_3$, so, $i_3-i_2-t$ binomials.  
More in general, attempting to write a closed formula, if we fix the indexes $j_1=s_1$ and $j_2=s_{[2]}+t-1$ for the monomials $w\in B_t\{u\}$, then $j_3$ can assume the values between $s_{[2]}+2t-1$ and $i_3$, that is, $i_3-s_{[2]}-2t+2$ values.
And so on, for the successive decompositions. By iterating this procedure we obtain better and better bounds for $|B_t\{u\}|$ until we reach its exact value.

In general, as also observed in Remark~\ref{rem:decomp}, the index $j_k$ of $w$ may assume at most the values from $s_{[k-1]}+(k-1)t-k+2=s_{[k-1]}+(k-1)(t-1)+1$ to $i_k$, hence, we must take $i_k-s_{[k-1]}-(k-1)(t-1)$ binomial coefficients, whereupon
we can write
\begin{equation*}\label{eq:countstrong3}
\begin{aligned}
c_k & =  \sum_{s_1=1}^{i_1} \sum_{s_2=1}^{i_2-s_1-t+1}
\cdots \sum_{s_k=1}^{i_k-s_{[k-1]}-(k-1)(t-1)}{\binom{\overline n-(d-1)(t-1)-s_{[k]}}{d-k}},
\end{aligned}
\end{equation*}
for $k=1,\ldots,d-1$.

Finally, for the value $k=d-1$, we obtain the desired cardinality, \emph{i.e.}, 
\begin{equation}\label{eq:countstrong3_a}
\begin{aligned}
c_{d-1} 
& =  \sum_{s_1=1}^{i_1}
\sum_{s_2=1}^{i_2-s_{1}-t+1}
\cdots \sum_{s_{d-1}=1}^{i_{d-1}-s_{[d-2]}-(d-2)(t-1)} {\binom{\overline n-(d-1)(t-1)-s_{[d-1]}}{1}}.
\end{aligned}
\end{equation}
Indeed, the formula (\ref{eq:countstrong3_a}) counts the $t$-spread monomials $w$ of $M_{n,d,t}$ whose indexes respect the conditions of the Borel order in relation to the monomial $u$, $j_s\geq i_s$ for $s\in [d]$.
\end{proof}

The following remark goes into detail on the number of the suitable binomial coefficients mentioned in Theorem~\ref{thm:countstrong}.

\begin{remark}\rm\label{rem:operator}
Under the same hypotheses and notation of Theorem~\ref{thm:countstrong}, we can state that the cardinality $|B_t\{u\}|$ is a sum of 
\begin{equation}\label{eq:operator}
\C_{d-1}\big(i_{d-1}-(d-2)t, i_{d-2}-(d-3)t,\ldots , i_2-t,i_1\big)
\end{equation}
suitable binomial coefficients, where the operators $\C_q$, $q\geq 1$, are defined as follows.
Let $a_1,a_2,\ldots,a_q$ be $q$ positive integers such that $a_r\geq a_{r+1}$ for $r\in[q-1]$, then  
\begin{equation*}
\C_q(a_1,a_2,\ldots,a_q) =
\begin{cases}
a_1 & \mbox{if }\, q=1;\\
\displaystyle \sum_{r=0}^{a_q-1}{\C_{q-1}(a_1-r,\ldots,a_{q-1}-r)} & \mbox{if }\, q>1.
\end{cases}  
\end{equation*}

As  far as the formula~(\ref{eq:operator}) is concerned, in general, the argument with index $k$ is exactly $i_k-s_{[k-1]}-(k-1)(t-1)$, \emph{i.e.}, the maximum number of binomial coefficients we have considered in (\ref{eq:countstrong3_a}). Then, to compute the argument indexed with $k$ we can set $s_p=1$, for $p\in [k-1]$, and we obtain 
\[
i_k-k+1-(k-1)(t-1)=i_k-(k-1)t.
\] 
These positions allow to easily calculate \emph{a priori} the number of binomial coefficients involved in the formula~(\ref{eq:countstrong3_a}), using only the support of the given monomial $u$.
For example, the calculation of $\C_3(6,4,2)$ is the following:
\[
\begin{aligned}
\C_3(6,4,2)&=\C_2(6,4)+\C_2(5,3)=\\
&=\C_1(6)+\C_1(5)+\C_1(4)+\C_1(3)+\C_1(5)+\C_1(4)+\C_1(3)=30.
\end{aligned}
\]

\end{remark}

%
%

The following two examples illustrate the procedure to compute the cardinality of $B_t\{u\}$ (see Theorem~\ref{thm:countstrong}).

\begin{example}\label{ex:countstrong1}
Let $S=K[x_1,\ldots,x_{13}]$, $t=1$ and $u=x_{i_1}x_{i_2}x_{i_3}x_{i_4}=x_2x_5x_8x_{11}\in S$. We want to compute the cardinality of $B_t\{u\}$. The greatest monomial of $S$, with respect to $>_{\slex}$, is $x_1x_2x_3x_4$, whence we need to compute $c_3=|B_1[x_1x_2x_3x_4,u]|=|B_1\{u\}|$.
		
As observed, we can limit our investigation to monomials with $\overline n=\max(u)=11$, that is, to monomials belonging to $M_{\overline n,d,t}$. The number of all squarefree monomials of $S$ of degree $4$ is $c=|M_{11,4,1}|=\binom{11}{4}=330$.
		
Let us consider the following binomial decomposition (Lemma~\ref{lem:decomp}):
\begin{equation}\label{eq:excountstrong1_0}
\binom{11}{4}=\underline{\binom{10}{3}}+\underline{\binom{9}{3}}+\binom{8}{3}+\binom{7}{3}+\binom{6}{3}+\binom{5}{3}+\binom{4}{3}+\binom{3}{3}.
\end{equation}
To count all the monomials $w=x_{j_1}x_{j_2}x_{j_3}x_{j_4}\in B_1\{u\}$, we can observe that we need to count all monomials where the first index is less than or equal to $2$, the second one is less than or equal to $5$ and so on, \emph{i.e.}, $j_1\leq 2$, $j_2\leq 5$, $j_3\leq 8$ and $j_4\leq 11$.

Looking at the first index of $u$, $i_1=2$, we must consider the first two binomial coefficients in (\ref{eq:excountstrong1_0}), \emph{i.e.}, $c_1=\binom{10}{3}+\binom{9}{3}$. Albeit this sum represents a bound for the cardinality, some of the monomials counted by these coefficients do not belong to $B_1\{u\}$. The solution is to iterate the decomposition, by Lemma~\ref{lem:decomp}, on each of the chosen binomial coefficients. Hence, we have:
\begin{equation}
\label{eq:excountstrong1_1-2}
\begin{aligned}
&\binom{10}{3}=\underline{\binom{9}{2}}+\underline{\binom{8}{2}}+\underline{\binom{7}{2}}+\underline{\binom{6}{2}}+\binom{5}{2}+\binom{4}{2}+\binom{3}{2}+\binom{2}{2},\\
&\binom{9}{3}=\underline{\binom{8}{2}}+\underline{\binom{7}{2}}+\underline{\binom{6}{2}}+\binom{5}{2}+\binom{4}{2}+\binom{3}{2}+\binom{2}{2}.
\end{aligned}
\end{equation}
Now, we can repeat the previous procedure considering the second index of $u$, $i_2=5$. From (\ref{eq:excountstrong1_1-2})$_1$, considering the meaning of the coefficients, we must take the first $i_2-(1+t)+1=4$ binomials coefficients: $\binom{9}{2}+\binom{8}{2}+\binom{7}{2}+\binom{6}{2}$.

From (\ref{eq:excountstrong1_1-2})$_2$, we must take the first $5-(2+1)+1=3$ binomials coefficients: $\binom{8}{2}+\binom{7}{2}+\binom{6}{2}$.
The sum of all the underlined binomial coefficients in (\ref{eq:excountstrong1_1-2}) is the bound $c_2$.

Furthermore, we must consider the third index of $u$, $i_3=8$. From the first selection of coefficients in (\ref{eq:excountstrong1_1-2})$_1$, we compute further decompositions from which to take a decreasing number of binomial coefficients at each step, starting from $i_3-2-2t+2=8-2=6$ (indeed, the maximum value is when $s_1=s_2=1$). So, we obtain:
\begin{equation}\label{eq:excountstrong1_3}
\begin{aligned}
\binom{9}{2}&={\bf \binom{8}{1}+\binom{7}{1}+\binom{6}{1}+\binom{5}{1}+\binom{4}{1}+\binom{3}{1}}+\binom{2}{1}+\binom{1}{1},\\
\binom{8}{2}&={\bf \binom{7}{1}+\binom{6}{1}+\binom{5}{1}+\binom{4}{1}+\binom{3}{1}}+\binom{2}{1}+\binom{1}{1},\\
\binom{7}{2}&={\bf \binom{6}{1}+\binom{5}{1}+\binom{4}{1}+\binom{3}{1}}+\binom{2}{1}+\binom{1}{1},\\
\binom{6}{2}&={\bf \binom{5}{1}+\binom{4}{1}+\binom{3}{1}}+\binom{2}{1}+\binom{1}{1}.
\end{aligned}
\end{equation}

For this path, the procedure can no longer be iterated, in fact these coefficients count all the monomials $w$ with $j_1=1$, $j_2\leq 5$, $j_3\leq 8$ and $j_4\leq 11$. All the highlighted binomial coefficients in (\ref{eq:excountstrong1_3}) give a contribute to $c_3$.
So, we have the following partial value:
\[
(8+7+6+5+4+3)+(7+6+5+4+3)+(6+5+4+3)+(5+4+3)=88.
\]

Now, we need to consider the second selections of binomials taken from (\ref{eq:excountstrong1_1-2})$_2$. In such a case, we can repeat exactly the previous reasoning, getting:
\begin{equation}\label{eq:excountstrong1_4}
\begin{aligned}
\binom{8}{2}&={\bf \binom{7}{1}+\binom{6}{1}+\binom{5}{1}+\binom{4}{1}+\binom{3}{1}}+\binom{2}{1}+\binom{1}{1},\\
\binom{7}{2}&={\bf \binom{6}{1}+\binom{5}{1}+\binom{4}{1}+\binom{3}{1}}+\binom{2}{1}+\binom{1}{1},\\
\binom{6}{2}&={\bf \binom{5}{1}+\binom{4}{1}+\binom{3}{1}}+\binom{2}{1}+\binom{1}{1}.
\end{aligned}
\end{equation}

Therefore, the number of monomials of $B_1\{u\}$ with $j_1=2$ is
\[
(7+6+5+4+3)+(6+5+4+3)+(5+4+3)=55.
\]

Finally, we have all the information to compute the cardinality of the $1$-strongly stable set generated by $u$: $c_3=|B_1\{u\}|=88+55=143$.
 		
The following scheme summarizes the reasoning made up to now for counting the monomials of $B_1\{ x_{2}x_{5}x_{8}x_{11} \}$:
\begin{align*}
\tbinom{11}{4}= &\underline{\tbinom{10}{3}}+\underline{\tbinom{9}{3}}+\tbinom{8}{3} + \tbinom{7}{3}+\tbinom{6}{3}+\tbinom{5}{3}+\tbinom{4}{3}+\tbinom{3}{3}\\
& \tbinom{10}{3}=
\begin{aligned}[t] & \underline{\tbinom{9}{2}} + \underline{\tbinom{8}{2}} + \underline{\tbinom{7}{2}} + \underline{\tbinom{6}{2}} + \tbinom{5}{2}+\tbinom{4}{2}+\tbinom{3}{2}+\tbinom{2}{2}\\
& \tbinom{9}{2}=
\begin{aligned}[t] & {\bf \tbinom{8}{1}+\tbinom{7}{1}+\tbinom{6}{1}+\tbinom{5}{1}+\tbinom{4}{1}+\tbinom{3}{1}}+\tbinom{2}{1}+\tbinom{1}{1}\ \rightarrow \boxed{33}\\
& \tbinom{8}{2}=
\begin{aligned}[t] & {\bf \tbinom{7}{1}+\tbinom{6}{1}+\tbinom{5}{1}+\tbinom{4}{1}+\tbinom{3}{1}}+\tbinom{2}{1}+\tbinom{1}{1}\ \rightarrow \boxed{25}\\
& \tbinom{7}{2}=
\begin{aligned}[t] & {\bf \tbinom{6}{1}+\tbinom{5}{1}+\tbinom{4}{1}+\tbinom{3}{1}}+\tbinom{2}{1}+\tbinom{1}{1}\ \rightarrow \boxed{18}\\
& \tbinom{6}{2}={\bf \tbinom{5}{1}+\tbinom{4}{1}+\tbinom{3}{1}}+\tbinom{2}{1}+\tbinom{1}{1}\ \rightarrow \boxed{12}
\end{aligned}
\end{aligned}
\end{aligned}\\
& \tbinom{9}{3}=
\begin{aligned}[t] & \underline{\tbinom{8}{2}} + \underline{\tbinom{7}{2}} + \underline{\tbinom{6}{2}} + \tbinom{5}{2}+\tbinom{4}{2}+\tbinom{3}{2}+\tbinom{2}{2}\\
& \tbinom{8}{2}=
\begin{aligned}[t] & {\bf \tbinom{7}{1}+\tbinom{6}{1}+\tbinom{5}{1}+\tbinom{4}{1}+\tbinom{3}{1}}+\tbinom{2}{1}+\tbinom{1}{1}\ \rightarrow \boxed{25}\\
& \tbinom{7}{2}=
\begin{aligned}[t] & {\bf \tbinom{6}{1}+\tbinom{5}{1}+\tbinom{4}{1}+\tbinom{3}{1}}+\tbinom{2}{1}+\tbinom{1}{1}\ \rightarrow \boxed{18}\\
& \tbinom{6}{2}= {\bf \tbinom{5}{1}+\tbinom{4}{1}+\tbinom{3}{1}}+\tbinom{2}{1}+\tbinom{1}{1}\ \rightarrow \boxed{12}
\end{aligned}
\end{aligned}
\end{aligned}
\end{aligned}
\end{align*}
We can note that the binomial coefficients in bold  are precisely those described by the Formula~(\ref{eq:countstrong3_a}). Moreover, their number is $\C_3(6,4,2)$ where the arguments are the maximum values of the indexes $j_1$, $j_2$ and $j_3$ taken in reverse order. In the Remark~\ref{rem:operator} we have seen that $\C_3(6,4,2)=30$. 
\end{example}

In order to point out the methodologies to compute the cardinality of $t$-strongly stable sets, we will consider the same monomial of Example~\ref{ex:countstrong1} but in a $2$-spread contest.

\begin{example}\label{ex:countstrong2}
Let $S=K[x_1,\ldots,x_{13}]$, $t=2$ and $u=x_{i_1}x_{i_2}x_{i_3}x_{i_4}=x_2x_5x_8x_{11}$. We want to compute $c_3=|B_2[x_1x_3x_5x_7,u]|=|B_2\{u\}|$. 

		
As previously done, we consider the number of all $t$-spread monomials of $S$ of degree $4$: $c=|M_{11,4,2}|=\binom{11-3}{4}=\binom{8}{4}=70$.
		
In order to compute $c_3=|B_2\{u\}|$, we take the following binomial decomposition:
\begin{equation}\label{eq:excountstrong2_0}
\binom{8}{4}=\underline{\binom{7}{3}}+\underline{\binom{6}{3}}+\binom{5}{3}+\binom{4}{3}+\binom{3}{3}.
\end{equation}

With the same scheme used in Example~\ref{ex:countstrong1}, we start looking at the first index of $u$, $i_1=2$. So, we consider the first two binomial coefficients in (\ref{eq:excountstrong2_0}), \emph{i.e.}, $c_1=\binom{7}{3}+\binom{6}{3}$. Clearly, we must iterate the decomposition on each of the chosen binomial coefficients:
\begin{equation}\label{eq:excountstrong2_1-2}
\begin{aligned}
\binom{7}{3}&=\underline{\binom{6}{2}}+\underline{\binom{5}{2}}+\underline{\binom{4}{2}}+\binom{3}{2}+\binom{2}{2},\\
\binom{6}{3}&=\underline{\binom{5}{2}}+\underline{\binom{4}{2}}+\binom{3}{2}+\binom{2}{2}.
\end{aligned}
\end{equation}

Considering the second index of $u$, $i_2=5$, from (\ref{eq:excountstrong2_1-2})$_1$, we must take the first $i_2-1-t+1=3$ binomials coefficients: $\binom{6}{2}+\binom{5}{2}+\binom{4}{2}$.

From (\ref{eq:excountstrong2_1-2})$_2$, we must take the first $5-2-t+1=2$ binomials coefficients: $\binom{5}{2}+\binom{4}{2}$.
Also in this case, the sum of all the underlined binomial coefficients in (\ref{eq:excountstrong2_1-2}) is the bound $c_2$.

Finally, we look at the third index of $u$, $i_3=8$. The maximum number of binomial coefficients we must take into consideration 
is $i_3-2-2t+2=4$. This value will decrease by $1$ for the next binomial coefficient, and so on. Hence, we have: 
\begin{equation}\label{eq:excountstrong2_3}
\begin{aligned}
\binom{6}{2}&={\bf \binom{5}{1}+\binom{4}{1}+\binom{3}{1}+\binom{2}{1}}+\binom{1}{1},\\
\binom{5}{2}&={\bf \binom{4}{1}+\binom{3}{1}+\binom{2}{1}}+\binom{1}{1},\\
\binom{4}{2}&={\bf \binom{3}{1}+\binom{2}{1}}+\binom{1}{1}.
\end{aligned}
\end{equation}

Finally, we have the following partial value of $c_3$:
\[
(5+4+3+2)+(4+3+2)+(3+2)=14+14=28.
\]

From (\ref{eq:excountstrong2_1-2})$_2$, we can do analogous operations:
\begin{equation}\label{eq:excountstrong2_4}
\begin{aligned}
\binom{5}{2}&={\bf \binom{4}{1}+\binom{3}{1}+\binom{2}{1}}+\binom{1}{1},\\
\binom{4}{2}&={\bf \binom{3}{1}+\binom{2}{1}}+\binom{1}{1}.
\end{aligned}
\end{equation}

In such a case, the number of monomials analyzed is
\[
(4+3+2)+(3+2)=14,
\]
and the cardinality of the $2$-spread strongly stable set generated by $u$ is $c_3=|B_2\{u\}|=28+14=42$. The procedure can be summarized as follows:
\begin{align*}
\tbinom{8}{4}= & \underline{\tbinom{7}{3}} + \underline{\tbinom{6}{3}} + \tbinom{5}{3}+\tbinom{4}{3}+\tbinom{3}{3}\\
& \tbinom{7}{3}=
\begin{aligned}[t] & \underline{\tbinom{6}{2}} + \underline{\tbinom{5}{2}} + \underline{\tbinom{4}{2}}+\tbinom{3}{2}+\tbinom{2}{2}\\
& \tbinom{6}{2}=
\begin{aligned}[t] & {\bf \tbinom{5}{1}+\tbinom{4}{1}+\tbinom{3}{1}+\tbinom{2}{1}}+\tbinom{1}{1}\ \rightarrow \boxed{14}\\
& \tbinom{5}{2}=
\begin{aligned}[t] & {\bf \tbinom{4}{1}+\tbinom{3}{1}+\tbinom{2}{1}}+\tbinom{1}{1}\ \rightarrow \boxed{\phantom{1}9}\\
& \tbinom{4}{2}= {\bf \tbinom{3}{1}+\tbinom{2}{1}}+\tbinom{1}{1}\ \rightarrow \boxed{\phantom{1}5}
\end{aligned}
\end{aligned}\\
& \tbinom{6}{3}=
\begin{aligned}[t] & \underline{\tbinom{5}{2}} + \underline{\tbinom{4}{2}} + \tbinom{3}{2}+\tbinom{2}{2}\\
& \tbinom{5}{2}=
\begin{aligned}[t] & {\bf \tbinom{4}{1}+\tbinom{3}{1}+\tbinom{2}{1}}+\tbinom{1}{1}\ \rightarrow \boxed{\phantom{1}9}\\
& \tbinom{4}{2}= {\bf \tbinom{3}{1}+\tbinom{2}{1}}+\tbinom{1}{1}\ \rightarrow \boxed{\phantom{1}5}
\end{aligned}
\end{aligned}
\end{aligned}
\end{align*}

Also in this case, the binomial coefficients in bold are those described by the Formula~(\ref{eq:countstrong3_a}). Similarly to Example~\ref{ex:countstrong1}, the number of these binomial coefficients is
\[
\C_3(4,3,2)=\C_2(4,3)+\C_2(3,2)=(4+3+2)+(3+2)=14,
\]
and the monomials in $B_2\{u\}$ turn out to be
\begin{align*}
\begin{array}{r}
x_{1}x_{3}x_{5}x_{7},\,x_{1}x_{3}x_{5}x_{8},\,x_{1}x_{3}x_{5}x_{9},\,x_{1}x_{3}x_{5}x_{10},\,x_{1}x_{3}x_{5}x_{11},\\
x_{1}x_{3}x_{6}x_{8},\,x_{1}x_{3}x_{6}x_{9},\,x_{1}x_{3}x_{6}x_{10},\,x_{1}x_{3}x_{6}x_{11},\\
x_{1}x_{3}x_{7}x_{9},\,x_{1}x_{3}x_{7}x_{10},\,x_{1}x_{3}x_{7}x_{11},\\
x_{1}x_{3}x_{8}x_{10},\,x_{1}x_{3}x_{8}x_{11},
\end{array} \rightarrow \boxed{14}\\
\begin{array}{r}
x_{1}x_{4}x_{6}x_{8},\,x_{1}x_{4}x_{6}x_{9},\,x_{1}x_{4}x_{6}x_{10},\,x_{1}x_{4}x_{6}x_{11},\\
x_{1}x_{4}x_{7}x_{9},\,x_{1}x_{4}x_{7}x_{10},\,x_{1}x_{4}x_{7}x_{11},\\
x_{1}x_{4}x_{8}x_{10},\,x_{1}x_{4}x_{8}x_{11},
\end{array} \rightarrow \boxed{\phantom{1}9}\\
\begin{array}{r}
x_{1}x_{5}x_{7}x_{9},\,x_{1}x_{5}x_{7}x_{10},\,x_{1}x_{5}x_{7}x_{11},\,x_{1}x_{5}x_{8}x_{10},\,x_{1}x_{5}x_{8}x_{11},
\end{array} \rightarrow \boxed{\phantom{1}5}\\
\begin{array}{r}
x_{2}x_{4}x_{6}x_{8},\,x_{2}x_{4}x_{6}x_{9},\,x_{2}x_{4}x_{6}x_{10},\,x_{2}x_{4}x_{6}x_{11},\\
x_{2}x_{4}x_{7}x_{9},\,x_{2}x_{4}x_{7}x_{10},\,x_{2}x_{4}x_{7}x_{11},\\
x_{2}x_{4}x_{8}x_{10},\,x_{2}x_{4}x_{8}x_{11},
\end{array} \rightarrow \boxed{\phantom{1}9}\\
\begin{array}{r}
x_{2}x_{5}x_{7}x_{9},\,x_{2}x_{5}x_{7}x_{10},\,x_{2}x_{5}x_{7}x_{11},\\
x_{2}x_{5}x_{8}x_{10},\,x_{2}x_{5}x_{8}x_{11}.
\end{array} \rightarrow \boxed{\phantom{1}5}
\end{align*}
\end{example}

The pseudocode in Algorithm~\ref{alg:countstrong} shows the implementation of the steps used in the proof of Theorem~\ref{thm:countstrong}.
\begin{algorithm}
\scriptsize
\caption{Computation of the cardinality $B_t\{u\} \subset M_{\overline n,d,t}$}
\label{alg:countstrong}
\KwIn{Monomial $u$, positive integer $t$}
\KwOut{positive integer $c$}
\SetKw{Error}{error}
\Begin{
   \eIf{\texttt{isTSpread($u$,$t$)}}{
      $m \gets \max(u)$\;      
      $d \gets \deg(u)$\;
      $c \gets 0$\;      
      $p \gets d-1$\;
      \For{$r \gets 1$ \KwTo $p$}{
         $ind(r) \gets 1$\;      
      }     

      \While{$ind(1)\leq j_1$}{
         $c \gets c+m-(d-1)*(t-1)-ind(1)-\cdots-ind(p)$\;
         $ind(p) \gets ind(p)+1$\;
         \While{$p>0$ and $ind(p) > i_p-ind(1)-\cdots-ind(p)+p*(1-t)$}{
            $ind(p) \gets 1$\;
            $p \gets p-1$\;
            $ind(p) \gets ind(p)+1$\;
         }
         $p \gets d-1$\;
      }   
   }{
      \Error{expected a $t$-spread monomial}\;      
   }    
\Return $c$\;
}
\end{algorithm}

The mechanism used in the algorithm fully exploits the theoretical result in (\ref{eq:countstrong3_a}). More precisely, a list of $d-1$ positive integer, initialized to $1$, acts as the multi-index $(s_1,\ldots,s_{d-1})$. At each step the achievement of the maximum value for the involved component is dynamically checked and the multi-index is updated.


\section{Observations and outlook}\label{sec:4}

The algorithmic construction and the examples in this paper have been tested using the package \texttt{TSpreadIdeals} running on \emph{Macaulay2} 1.81.

To the best of our knowledge, packages for managing classes of $t$-spread ideals have not been heretofore implemented. Most of the algorithms presented in the package are due to the author of this paper. All the methods discussed using pseudocode derive from results of the same authors that can be found in this or in other papers.

We are confident that the package \texttt{TSpreadIdeals} can be used for further studies and applications. In fact, the methods it contains could be useful for investigating the $t$-spread \emph{generic initial ideal} or in general to find alternative methods to compute the \emph{generic initial ideal} of a graded squarefree ideal \cite{AHH3}.

Moreover, we are working to optimize construction and counting methods for $t$-stable sets of monomials. As a first step, we plan to use the same approach for $t$-strongly stable sets, even if  
we are  aware that  for the $t$-stable case the \emph{linearization} could encounter some difficulties. 

We are confident that many other problems will arise around $t$-spread structures, and that, consequently, new implementations could be added to the package in order to improve it so providing new tools and functionalities.


\appendix

\section{Package overview} \label{sec:5}
In this Section, we describe the main features of the \emph{Macaulay2} package \texttt{TSpreadIdeals}.

First, we present some algorithms implemented in the package not yet discussed in the previous sections. Then, we illustrate how to use the main functions of the package by applying them to the solution of appropriate examples.

\subsection{Package usage}\label{sec:5_1}

In this Section, we show some \emph{Macaulay2} instructions to manage $t$-spread monomials and to solve some related problems for the classes of $t$-lex and $t$-strongly stable segments or ideals.

In the first example, we display some auxiliary methods for $t$-spread monomials/structures, \emph{e.g.}, for verifying if a monomial/ideal is $t$-spread or for sieving $t$-spread monomials from a list of monomials. 

\begin{example}
Let $S=K[x_1,\ldots,x_{14}]$, $u=x_3x_7x_{10}x_{14}\in M_{14,4,3}$ and $v=x_1x_5x_9x_{13}\in M_{14,4,4}$. We can use the method \texttt{isTSpread} to check whether a monomial, a list of monomials or an ideal is $t$-spread. Moreover, given a list of monomials we can extract all the $t$-spread monomials of the list.
\begin{Verbatim}[xleftmargin=0.5cm]
i1 : loadPackage "TSpreadIdeals";
i2 : S=QQ[x_1..x_14];
i3 : u=x_3*x_7*x_10*x_14;
i4 : isTSpread(u,3)
o4 : true
i5 : isTSpread(u,4)
o5 = false
i6 : v=x_1*x_5*x_9*x_13;
i7 : isTSpread(v,4)
o7 : true
i8 : l={u,v}
o8 : {x_3x_7x_10x_14, x_1x_5x_9x_13}
o8 : List
i9 : isTSpread(l,4)
o9 = false
i10 : I=ideal l
o10 : ideal(x_3x_7x_10x_14, x_1x_5x_9x_13)
o10 : Ideal of S
i11 : IsTSpread(I,4)
o11 : false
i12 : tSpreadList(l,4)
o12 = {x_1x_5x_9x_13}
o12 : List
i13 : isTSpread(oo,4)
o13 : true
\end{Verbatim}

Furthermore, we are able to compute the $t$-shadow of a set of $t$-spread monomials (Definition~\ref{def:shad}). This is a useful tool for some constructions related to the $f_t$-vector of a $t$-spread ideal (see \cite{CAC}).
 
\begin{Verbatim}[xleftmargin=0.5cm]
i14 : tShadow(l,3)
o14 : {}
o14 : List
i15 : tShadow(l,2)
o15 : {x_1x_3x_5x_9x_13, x_1x_3x_7x_10x_14, x_1x_5x_7x_9x_13,
       x_1x_5x_9x_11x_13, x_3x_5x_7x_10x_14, x_3x_7x_10x_12x_14}
o15 : List
\end{Verbatim}
\end{example}

Now, we illustrate how some examples of this paper have been obtained by using suitable functions of the package.  

\begin{example}(see Example~\ref{ex:countlex})\\
Let $S=K[x_1,\ldots,x_{11}]$, $t=3$ and $u=x_2x_6x_{10}\in M_{11,3,3}$. We want to compute $L_t\{u\}$ and its cardinality.

To obtain the results, one can use the function \texttt{tLexMon(u,t)} that returns the list of the monomials of $L_\texttt{t}\{\texttt{u}\}$. In detail, the monomials are constructed using the function \texttt{tNextMon(w,t)} iteratively (see Proposition~\ref{prop:tnextlex} and Algorithm~\ref{alg:tnextlex}). A single use of it will return the greatest monomial less than \texttt{u}, in $M_{n,d,t}$, with respect to $>_\slex$. So, to compute $L_\texttt{t}\{\texttt{u}\}$ the routine must start from the greatest monomial of $M_{n,d,t}$ until  \texttt{u} is reached.
The method \texttt{countTLexMon(u,t)} computes the cardinality of $L_t\{u\}$ by using the result of Theorem~\ref{thm:countlex} (see Algorithm~\ref{alg:countlex}).

\begin{Verbatim}[xleftmargin=0.5cm]
i1 : loadPackage "TSpreadIdeals";
i2 : S=QQ[x_1..x_11];
i3 : u=x_2*x_6*x_10;
i4 : l=tLexMon(u,3)
o4 = {x_1x_4x_7, x_1x_4x_8, x_1x_4x_9, ..., x_2x_6x_9, x_2x_6x_10}
o4 : List
i5 : #l
o5 = 21
i6 : countTLexMon(u,3)
o6 : 21
\end{Verbatim}

To verify that a set of monomials is a $t$-lex segment, the method \texttt{isTLexSeg} can be used. This method gets a list of monomials \texttt{l} and verifies if it coincides with the $t$-lex segment containing the maximum and the minimum of the list \texttt{l}.
In such a case, we can note that the segment in \texttt{l} is $L_3\{\texttt{u}\}$, \emph{i.e.}, an initial $t$-lex segment. It can be obtained using a more general method and fixing the starting monomial.  

\begin{Verbatim}[xleftmargin=0.5cm]
i7 : isTLexSeg l
o7 = true
i8 : ini=product for i to ((degree u)#0-1) list S_(i*t)
o8 = x_1x_4x_7
o8 : S
i9 : m=tLexSeg(ini,u,3)
o9 = {x_1x_4x_7, x_1x_4x_8, x_1x_4x_9, ..., x_2x_6x_9, x_2x_6x_10}
o9 : List
i10 : l==m
o10 : true
\end{Verbatim}

\end{example}

Examples~\ref{ex:countstrong1} and \ref{ex:countstrong2} differ from each other only for the $t$-spread context. We have proposed them both to highlight the differences in building and counting the monomials. 

\begin{example}(see Examples~\ref{ex:countstrong1}, \ref{ex:countstrong2})\\
Let $S=K[x_1,\ldots,x_{13}]$ and $u=x_{i_1}x_{i_2}x_{i_3}x_{i_4}=x_2x_5x_8x_{11}$. We show the instructions to compute 
$B_t\{u\}$ and its cardinality for both values $t=1,2$.

We will use the function \texttt{tStronglyStableMon(u,t)} to obtain the list of the monomials of $B_\texttt{t}\{\texttt{u}\}$.
This method calls  \texttt{tStronglyStableSeg(ini,u,t)} (see Proposition~\ref{prop:tnextstrong} and Algorithm~\ref{alg:tnextstrong}), where \texttt{ini} is the greatest monomial of $M_{n,d,t}$. Indeed, we have already observed that this monomial always belongs to $B_\texttt{t}\{\texttt{u}\}$.
The function \texttt{countTStronglyStableMon(u,t)} gives the cardinality of $B_\texttt{t}\{\texttt{u}\}$ (see Theorem~\ref{thm:countstrong} and Algorithm~\ref{alg:countstrong}).
  
\begin{Verbatim}[xleftmargin=0.5cm]
i1 : loadPackage "TSpreadIdeals";
i2 : S=QQ[x_1..x_13];
i3 : u=x_2*x_5*x_8*x_11;
i4 : l_1=tStronglyStableMon(u,1)
o4 = {x_1x_2x_3x_4, x_1x_2x_3x_5, x_1x_2x_3x_6, ..., x_2x_5x_8x_10,
      x_2x_5x_8x_11}
o4 : List
i5 : #l_1
o5 = 143
i6 : countTStronglyStableMon(u,1)
o6 : 143
\end{Verbatim}

As in the previous example, the method \texttt{isTStronglyStableSeg(l,t)} checks if a set of monomials is a $t$-strongly stable segment. This method gets a list of monomials \texttt{l} and verifies if it coincides with the $t$-strongly stable segment containing the maximum and the minimum of the list \texttt{l}.
In the case of the example, we can note that the segment in \texttt{l} is $B_t\{\texttt{u}\}$, \emph{i.e.}, an initial $t$-strongly stable segment.

\begin{Verbatim}[xleftmargin=0.5cm]
i7 : isTStronglyStableSeg l_1
o7 = true
i8 : ini=product for i to ((degree u)#0-1) list S_(i*t)
o8 = x_1x_2x_3x_4
o8 : S
i9 : m_1=tStronglyStableSeg(ini,u,1)
o9 = {x_1x_2x_3x_4, x_1x_2x_3x_5, x_1x_2x_3x_6, ..., x_2x_5x_8x_10,
      x_2x_5x_8x_11}
o9 : List
i10 : l_1==m_1
o10 : true
\end{Verbatim}

For the example with $t=2$, we continue the previous session of \emph{Macaulay2} assigning the new value of $t$. The instructions are the same as before. 

\begin{Verbatim}[xleftmargin=0.5cm]
i11 : l_2=tStronglyStableMon(u,2)
o11 = {x_1x_3x_5x_7, x_1x_3x_5x_8, x_1x_3x_5x_9, ..., x_2x_5x_8x_10,
       x_2x_5x_8x_11}
o11 : List
i12 : #l_2
o12 : 42
i13 : countTStronglyStableMon(u,2)
o13 = 42
i14 : isTStronglyStableSeg l_2
o14 = true
i15 : ini=product for i to ((degree u)#0-1) list S_(i*t)
o16 = x_1x_3x_5x_7
o16 : S
i17 : m_2=tStronglyStableSeg(ini,u,2)
o17 = {x_1x_3x_5x_7, x_1x_3x_5x_8, x_1x_3x_5x_9, ..., x_2x_5x_8x_10,
      x_2x_5x_8x_11}
o17 : List
i18 : l_2==m_2
o18 : true
\end{Verbatim}

The construction of the $t$-strongly stable set $B_t\{u\}$ (Algorithm~\ref{alg:tnextstrong}) is very fast: it manipulates only the indexes of the given monomial to build the monomial in output. For instance, let $S=K[x_1,\ldots,x_{30}]$ and $u=x_3x_8x_{12}x_{18}x_{23}x_{28}\in M_{30,6,2}$. In such a particular case, we have just over thirty thousands monomials belonging to $B_\texttt{t}\{\texttt{u}\}$. The method \texttt{tStronglyStableMon} takes about one minute to give the final list. We have not be able to obtain the result by using classical  methods. Moreover, if one is only interested to the cardinality of the list (without construction), then the method \texttt{countTStronglyStableMon} takes only few seconds to give the result.
\end{example}

Now, we show the instructions to solve the Problem~\ref{prob:Betti}. In such a case, we illustrate the steps to develop an example already done in the paper \cite{ACF2}.

\begin{example}(see \cite[Example~5.1]{ACF2})\\
Let $S=K[x_1,\ldots,x_{25}]$, $t=3$ and $r=4$. We want to verify if the list of pairs $\C=\big\{(k_1,\ell_1),(k_2,\ell_2),(k_3,\ell_3),(k_4,\ell_4)\big\}=\big\{(6,2),(5,4),(4,5),(3,7)\big\}$ and the list $a=(a_1,a_2,a_3,a_4)=(2,1,3,2)$ represent the positions and the values of the extremal Betti numbers of a $t$-spread ideal.

To solve the problem, we use the function \texttt{tExtremalBettiMonomials} that returns, if possible, the list of the basic monomials involved in the characterization of the extremal Betti numbers in the $t$-spread context.
Furthermore, in order to determine the smallest $t$-strongly stable ideal with the requested extremal Betti numbers we can use the function \texttt{tStronglyStableIdeal}.
  
\begin{Verbatim}[xleftmargin=0.5cm]
i1 : loadPackage "TSpreadIdeals";
i2 : S=QQ[x_1..x_25];
i3 : pairs={{6,2},{5,4},{4,5},{3,7}};
i4 : a={2,1,3,2};
i5 : bm=tExtremalBettiMonomials(S,pairs,a,t)
o5 : {x_1x_10, x_2x_10, x_3x_6x_9x_15,
      x_3x_6x_10x_13x_17, x_3x_6x_10x_14x_17, x_3x_6x_11x_14x_17,
      x_3x_7x_10x_13x_16x_19x_22, x_4x_7x_10x_13x_16x_19x_22}
o5 = List
i6 : I=tStronglyStableIdeal(ideal bm,t)
o6 : {x_1x_4, x_1x_5, x_1x_6, x_1x_7, x_1x_8, x_1x_9, x_1x_10, x_2x_5,
      x_2x_6, x_2x_7, x_2x_8, x_2x_9, x_2x_10, x_3x_6x_9x_12,
      x_3x_6x_9x_13, x_3x_6x_9x_14, x_3x_6x_9x_15, x_3x_6x_10x_13x_16,
      x_3x_6x_10x_13x_17, x_3x_6x_10x_14x_17, x_3x_6x_11x_14x_17,
      x_3x_7x_10x_13x_16x_19x_22, x_4x_7x_10x_13x_16x_19x_22}
o6 : Ideal of S
\end{Verbatim}

We can use the function \texttt{tExtremalBettiCorners} to verify if the ideal $I$ has the extremal Betti numbers in the given positions, that is, it has the corners coinciding with the given pairs. This method implements the computation of $\ds(I)$, the degree-sequence of $I$ (see (\ref{eq:degseq2})).  

\begin{Verbatim}[xleftmargin=0.5cm]
i7 : tExtremalBettiCorners(I,t)
o7 : {(6,2), (5,4), (4,5), (3,7)}
o7 = List
i8 : minimalBettiNumbersIdeal(I)
o8 :          0  1   2   3  4  5 6
     total : 23 77 117 100 51 15 2
       2   : 13 42  70  70 42 14 2
       3   :  -  -   -   -  -  - -
       4   :  4 14  20  15  6  1 -
       5   :  4 15  21  13  3  - -
       6   :  -  -   -   -  -  - -
       7   :  2  6   6   2  -  - -
o8 : BettyTally
\end{Verbatim}
 \end{example}

Finally, we illustrate some methods of the package to approach a generalization of the \emph{Kruskal-Katona}'s theorem to $t$-spread ideals, as shown in \cite{CAC}.

\begin{example}(see \cite[Example~2.4]{CAC} and \cite[Example~3.11]{CAC})\\
Let $S=K[x_1,\ldots,x_8]$ and let 
\[
I=(x_1x_3x_5, x_1x_3x_6, x_1x_3x_7, x_1x_3x_8, x_1x_4x_6, x_1x_4x_7, x_1x_4x_8, x_2x_4x_6, x_2x_4x_7, x_2x_4x_8)
\]
be the $2$-spread ideal.

The function \texttt{fTVector(I,t)} is able to compute the $f_\texttt{t}$-vector of a $t$-spread ideal. Moreover, the method \texttt{tLexIdeal(I,t)} computes, if it exists, the $t$-lex ideal that share the same $f_t$-vector of $I$. \\ The overloaded function, \texttt{tLexIdeal(S,f,t)}, allows to compute the $t$-lex ideal of \texttt{S} whose $f_t$-vector is \texttt{f}, if it exists.  
  
\begin{Verbatim}[xleftmargin=0.5cm]
i1 : loadPackage "TSpreadIdeals";
i2 : S=QQ[x_1..x_8];
i3 : I=ideal(x_1*x_3*x_5, x_1*x_3*x_6, x_1*x_3*x_7, x_1*x_3*x_8,
             x_1*x_4*x_6, x_1*x_4*x_7, x_1*x_4*x_8, x_2*x_4*x_6,
             x_2*x_4*x_7, x_2*x_4*x_8);
i4 : isTLexIdeal(I,2)
o4 : false
i5 : fTVector(I,2)
o5 = {1, 8, 21, 10, 0}
o5 = List
i6 : IL=tLexIdeal(I,2)
o6 : {x_1x_3x_5, x_1x_3x_6, x_1x_3x_7, x_1x_3x_8, x_1x_4x_6, x_1x_4x_7,
      x_1x_4x_8, x_1x_5x_7, x_1x_5x_8, x_1x_6x_8, x_2x_4x_6x_8}
o6 : Ideal of S
i7 : fTVector(IL,2)
o7 = {1, 8, 21, 10, 0}
o7 = List
\end{Verbatim}

In the next example, we define a sequence of nonnegative integers and check if it satisfies the conditions of the \emph{KK} theorem. The function \texttt{tMacaulayExpansion} returns the binomial expansion defined in \cite[Definition~3.5]{CAC}. This method is used in the implementation of \texttt{isFTVector(S,f,t)} that verifies whether a sequence of integer is the $f_\texttt{t}$-vector of a \texttt{t}-strongly stable ideal of \texttt{S}.

\begin{Verbatim}[xleftmargin=0.5cm]
i1 : loadPackage "TSpreadIdeals";
i2 : S=QQ[x_1..x_12];
i3 : f={1,12,50,20,15}
o3 = {1, 12, 50, 20, 15}
o3 : List
i4 : tMacaulayExpansion(f_1,12,1,1,Shift=>true)
o4 = {{12,2}}
o4 : List
i5 : solveBinomialExpansion oo
o5 : 66
i6 : solveBinomialExpansion tMacaulayExpansion(f_2,12,2,1,Shift=>true)
o6 : 130
i7 : solveBinomialExpansion tMacaulayExpansion(f_3,12,3,1,Shift=>true)
o7 : 15
i8 : solveBinomialExpansion tMacaulayExpansion(f_4,12,4,1,Shift=>true)
o8 = 6
i9 : ifFTVector(S,f,1)
o9 = true
i10 : I=tLexIdeal(S,f,1)
o10 = {x_1x_2, x_1x_3, x_1x_4, ... , x_7x_9x_10x_11x_12, x_8x_9x_10x_11x_12}
o10 : Ideal of S 
i11 : numgens I
o11 : 132
i12 : fTVector(I,1)==f
o12 : true
i13 : ifFTVector(S,f,2)
o13 : false
i14 : tLexIdeal(S,f,2)
stdio:9:1:(3): error: expected a valid ft-vector
i15 : solveBinomialExpansion tMacaulayExpansion(f_3,12,3,2,Shift=>true)
o15 : 5
i16 : f_4<oo
o16 : false
\end{Verbatim}
\end{example}

\subsection{List of the main functions of \texttt{TSpreadIdeals}}\label{sec:5_2}

\hspace{-1.4cm}
\begin{tabular}{p{0.45\textwidth}p{0.70\textwidth}}
\texttt{isTSpread(u,t)} or \texttt{isTSpread(l,t)} or \texttt{isTSpread(I,t)} & Check whether a monomial, a list of monomials or an ideal is \texttt{t}-spread.\\[3pt]
\texttt{tSpreadList(l,t)} & Give all the \texttt{t}-spread monomials of the list \texttt{l}.\\[3pt]
\texttt{tSpreadIdeal(I,t)} & Give the \texttt{t}-spread ideal generated by all the \texttt{t}-spread monomials of the ideal \texttt{I}.\\[3pt]
\texttt{tNexMon(u,t)} \newline option \texttt{FixedMax=$>$Boolean} & Give the \texttt{t}-lex successor of \texttt{u}. If \texttt{FixedMax} is \texttt{true} then it gives the successor of \texttt{u} in $A^t(k,\ell)$.\\[3pt]
\texttt{tLastMon(u,gap,t)} \newline option \texttt{MaxInd=$>$ZZ} & Give the last \texttt{t}-spread monomial of the shadow of $B_t\{$\texttt{u}$\}$ with respect to $>_\slex$. If \texttt{MaxInd} is nonnegative it gives $\min\BShad(u)_{(k,\ell)}$.\\[3pt]
\texttt{tShadow(u,t)} or \texttt{tShadow(l,t)} & Give the \texttt{t}-shadow of the \texttt{t}-spread monomial \texttt{u} or of all the \texttt{t}-spread monomials in the list \texttt{l}.\\[3pt]
\texttt{tLexSeg(v,u,t)} & Give the \texttt{t}-lex segment $L_t[\texttt{v, u}]$.\\[3pt]
\texttt{isTLexSeg(l,t)} & Check whether a \texttt{t}-spread set \texttt{l} is a $t$-lex segment.\\[3pt]
\texttt{tLexMon(u,t)} & Give the initial \texttt{t}-lex segment $L_\texttt{t}\{\texttt{u}\}$.\\[3pt]
\texttt{countTLexMon(u,t)} \newline option \texttt{FixedMax=$>$Boolean}& Give the cardinality of $L_\texttt{t}\{\texttt{u}\}$. If \texttt{FixedMax} is \texttt{true} then it gives the cardinality $L_\texttt{t}\{\texttt{u}\} \cap A^t(k,\ell)$.\\[3pt]
\texttt{tVeroneseSet(Ring,d,t)} & Give the set of all the \texttt{t}-spread monomials of degree \texttt{d} of \texttt{S}.\\[3pt]
\texttt{tVeroneseIdeal(Ring,d,t)} & Give the ideal generated by all the \texttt{t}-spread monomials of degree \texttt{d} of \texttt{S}.\\[3pt]
\texttt{tPascalIdeal(Ring,t)} & Give the \texttt{t}-spread Pascal ideal of \texttt{S}.\\[3pt]
\texttt{fTVector(I,t)} & Give the $f_\texttt{t}$ vector of the \texttt{t}-spread ideal \texttt{I}.\\[3pt]
\texttt{tMacaulayExpansion(a,n,d,t)} \newline option \texttt{Shift$=>$Boolean}& Give the \texttt{t}-spread \texttt{d}-th Macaulay expansion of \texttt{a} on \texttt{n} indeterminates. If \texttt{Shift} is \texttt{true} it applies the KK's operator.\\[3pt]
\texttt{isFTVector(S,f,t)} & Check whether the sequence \texttt{f} is the $f_\texttt{t}$ vector of a suitable \texttt{t}-strongly stable ideal of \texttt{S}.\\[3pt]
\texttt{tLexIdeal(I,t)} or \texttt{tLexIdeal(S,f,t)} & Give the \texttt{t}-lex ideal sharing the same $f_\texttt{t}$-vector of \texttt{I} or the \texttt{t}-lex ideal of \texttt{S} with \texttt{f} as $f_\texttt{t}$-vector.\\[3pt]
\texttt{isTLexIdeal(I,t)} & Check whether the \texttt{t}-spread ideal \texttt{I} is a \texttt{t}-lex ideal.\\[3pt]
\texttt{tStronglyStableSeg(v,u,t)} & Give the \texttt{t}-strongly stable segment of monomials between the monomials \texttt{v} and \texttt{u}.\\[3pt]
\texttt{isTStronglyStableSeg(l,t)} & Check whether the list of monomials \texttt{l} is a \texttt{t}-strongly stable segment.\\[3pt]
\texttt{tStronglyStableMon(u,t)} & Give the \texttt{t}-strongly stable set of monomials generated by \texttt{u}.\\[3pt]
\texttt{countTStronglyStableMon(u,t)} & Give the cardinality of $ B_\texttt{t}\{\texttt{u}\}$.\\[3pt]
\texttt{tStronglyStableIdeal(I,t)} & Give the smallest \texttt{t}-strongly stable ideal containing \texttt{I}.\\[3pt]
\texttt{isTStronglyStableIdeal(I,t)} & Check whether the ideal \texttt{I} is \texttt{t}-strongly stable.\\[3pt]
\texttt{tExtremalBettiCorners(I,t)} & Give the list of the corners of \texttt{I}.\\[3pt]
\texttt{tExtremalBettiMonomials(S,corn,a,t)} & Give the basic \texttt{t}-spread monomials for the extremal Betti numbers assigned positions as well as values.\\
\end{tabular}

\bibliographystyle{abbrv}
\bibliography{Amata}{}

\end{document}